\newtheorem{theorem}{Theorem}[section]
\newtheorem{corollary}{Corollary}[section]
\newtheorem{lemma}[theorem]{Lemma}
\newtheorem{definition}[theorem]{Definition}
\def \B {\mathbb B}
\def \R {\mathbb R}
\def \s {\mathbb S}
\def \Lp {L_p}
\def \S {{\s^{n-2}}}
\def \Rnm {{\R^n_+}}
\newcommand {\LW}[1] {{L^{#1}(\Rnm, \omega)}}
\def \gl {\operatorname{GL}}
\def \eps{\varepsilon}
\def \partialgradient{\tilde \nabla}
\def \fullgradient{\nabla}
\newcommand {\vol}{\operatorname{vol}}
\def \E {\mathcal E}
\def \EE {\mathcal E_p(f, \omega)}
\def \ZZ {Z_p(f)}
\def \GG {\|\partialgradient f\|_\LW{p}}
\def \ft {\frac{\partial f}{\partial t}}
\def \TT {\left\|\ft\right\|_\LW{p}}
\newcommand \Moma[1] {\int_{{(#1)}_+} \omega(y) dy}
\title{Sharp affine weighted $L^p$ Sobolev type inequalities}
\author{J. Haddad}
\author{C. H. Jim\'enez}
\author{M. Montenegro}
\begin{document}

\maketitle

\begin{abstract}
	We establish sharp affine weighted $L^p$ Sobolev type inequalities by using the $L_p$ Busemann-Petty centroid inequality proved by Lutwak, Yang and Zhang \cite{L-Y-Z}.
Our approach consists in combining in a convenient way the latter one with a suitable family of sharp weighted $L^p$ Sobolev type inequalities obtained by Nguyen \cite{Ng} and allows to characterize all extremizers in some cases. The new inequalities don't rely on any euclidean geometric structure.
\end{abstract}

\section{Introduction and statements}

Weighted Sobolev inequalities play a fundamental role in Analysis and Geometry, see for example \cite{DM} and \cite{F}, among many others references. In \cite{BGL}, Bakry, Gentil and Ledoux proved that for any $n \geq 2$ and $a \geq 0$ with $n + a > 2$, there exists a sharp constant $S(n,a)$ such that for any smooth compactly supported function $f$ on $\R^{n-1} \times \R_+ \subset \R^n$, where $\R_+ = (0, + \infty)$, the specially important weighted inequality holds

\begin{equation} \label{eucineq}
\left( \int_{\R^{n-1}} \int_0^\infty |f|^{2_a^*} x_n^a dx \right)^{\frac{1}{2_a^*}} \leq S(n,a) \left( \int_{\R^{n-1}} \int_0^\infty |\nabla f|^2 x_n^a dx \right)^{\frac{1}{2}}\, ,
\end{equation}
where $2_a^* = \frac{2n + 2a}{n-2 + a}$. Moreover, the value of $S(n,a)$ is given by

\[
S(n,a) = \left( \frac{1}{\pi (n + a) (n-2+a)} \right)^\frac{1}{2} \left[ \frac{2 \pi^\frac{1 + a}{2} \Gamma(n + a)}{\Gamma(\frac{1 + a}{2}) \Gamma(\frac{n + a}{2})} \right]^\frac{1}{n + a}\, ,
\]
where $\Gamma$ is the usual {\it Gamma function} defined by $\Gamma(r) = \int_0^\infty t^{r-1} e^{-t} dt$ for $r > 0$.

Bakry, Gentil and Ledoux proved the euclidean sharp inequality (\ref{eucineq}) by using the {\it Curvature-Dimension} condition. More specifically, the authors transported through the stereographic projection the space $\R^{n-1} \times \R_+$ endowed with the measure having density $x_n^a$ with respect to Lebesgue measure on $\R^{n-1} \times \R_+$ to a half sphere $\{ x \in \s^n : x_n > 0\}$ endowed with the measure having density $x_n^a$ with respect to the Riemannian measure of the sphere. The operator associated to this transportation, which is given by $\Delta_{\s^n} + a \nabla ({\rm log} x_n)$, satisfies the {\it Curvature-Dimension} condition $CD(n + a - 1, n + a)$ on the half sphere $\{ x \in \s^n : x_n > 0\}$. The proof then follows by using that this condition implies a sharp Sobolev inequality on the same half sphere which is, module the stereographic projection map, equivalent to (\ref{eucineq}).

The following extension of (\ref{eucineq}) to $L^p$-norms of gradients,

\begin{equation} \label{eucineq-p}
\left( \int_{\R^{n-1}} \int_0^\infty |f|^{p_a^*} x_n^a dx \right)^{\frac{1}{p_a^*}} \leq S(n,p,a) \left( \int_{\R^{n-1}} \int_0^\infty |\nabla f|^p x_n^a dx \right)^{\frac{1}{p}}\, ,
\end{equation}
with $p_a^* = \frac{(n + a)p}{n-p + a}$, was established by Cabré, Ros-Oton and Serra for $n \geq 2$, $a \geq 0$ and $1 < p < n + a$ in the recent papers \cite{CRS1} and \cite{CRS2}. The best constant $S(n,p,a)$ is given by

\[
S(n,p,a) = \left( \frac{(p-1)^{p-1}}{(n + a) (n-p+a)^{p-1}} \right)^\frac{1}{p} \left[ \frac{\Gamma(n + a)}{\Gamma(\frac{(n + a)(p-1)}{p} + 1) \Gamma(\frac{n + a}{p}) \int_{\B_+} x_n^a dx} \right]^\frac{1}{n + a}\, ,
\]
where $\B_+ = \{x \in B_1(0):\; x_n > 0\}$ and

\[
\int_{\B_+} x_n^a dx = \frac{\pi^{\frac{n-2-a}{2}} \Gamma(\frac{1 + a}{2})}{2\Gamma(\frac{n + a + 2}{2})}\, .
\]
Their proof was based on a key weighted isoperimetric inequality on $\R^{n-1} \times \R_+$ which has been obtained in \cite{CRS1} for monominal weights and in \cite{CRS2} for general weights on open convex cones. Note that the latter one clearly includes the half space $\R^{n-1} \times \R_+$ as a special case. The proofs of the weighted isoperimetric inequality are based on the {\it ABP} method applied to the Neumann type operator $\mathcal L u(x) := x_n^{-a} {\rm div}(x_n^a \nabla u(x))$. Then, the authors proved the euclidean $L^p$ version of (\ref{eucineq}) by combining the referred isoperimetric inequality with a weighted radial rearrangement argument of Talenti on $\R^{n-1} \times \R_+$. More recently, Nguyen \cite{Ng} established non-euclidean counterparts of (\ref{eucineq-p}) and also weighted $L^p$ Gagliardo-Nirenberg and log-Sobolev inequalities through a mass transport approach inspired on the famous work of Cordero-Erausquin, Nazaret and Villani \cite{CNV} in which the technique has been introduced for non-euclidean $L^p$ Sobolev and Gagliardo-Nirenberg inequalities on the whole space.

By using an argument of dimension reduction applied to the weighted inequality (\ref{eucineq-p}) in dimension $n + 1$, as done in \cite{BGL}, one derives the classical sharp $L^p$ Sobolev inequality for smooth compactly supported functions on $\R^n$,

\begin{equation} \label{Sobolev}
\left( \int_{\R^n} |f(y)|^{p^*} dy \right)^{\frac{1}{p^*}} \leq S(n,p,0) \left( \int_{\R^n}|\nabla f(y)|^p dy \right)^{\frac{1}{p}}
\end{equation}
where $p^* = p_0^*$, whose related literature is extremely rich. As is well known, Inequality (\ref{Sobolev}) was established by Federer and Fleming \cite{FF}, Fleming and Rishel \cite{FR} and Maz'ja \cite{Ma} for $p = 1$ and Aubin \cite{Au} and Talenti \cite{Ta} for $1 < p < n$. In addition, the sharp $L^1$ Sobolev inequality is equivalent to the classical isoperimetric inequality in euclidean $n$-space which is also the geometric core of the sharp $L^p$ Sobolev inequality for $1 < p < n$. Years later, Lutwak, Yang and Zhang introduced and proved a stronger sharp $L^p$ Sobolev inequality which imply its classical counterpart. Precisely, the sharp {\it affine} $L^p$ Sobolev inequality states for any smooth compactly supported function on $\R^n$ that

\begin{equation} \label{affineSobolev}
\left( \int_{\R^n} |f(y)|^{p^*} dx \right)^{\frac{1}{p^*}} \leq S(n,p,0) \mathcal{E}_p(f)\, ,
\end{equation}
where

\[
\mathcal{E}_p(f) := c_{n,p} \left(\int_{\s^{n-1}}\|\fullgradient_\xi f\|_{L^p(\R^n)}^{-n}d\xi\right)^{-\frac 1n}
\]
with
\begin{eqnarray*}
c_{n,p} = \left(n \rho_n\right)^{\frac 1n} \left(\frac{n \rho_n \rho_{p-1}}{2 \rho_{n+p-2}}\right)^{\frac 1p},
\end{eqnarray*}
where $\fullgradient_\xi f(y) = \fullgradient f(y) \cdot \xi$ and $\rho_k$ denotes the volume of the unit euclidean ball $\B^k$ in $\R^k$ and for $k$ real, one has

\[
\rho_k = \frac{\pi ^{\frac{k}{2}}}{\Gamma \left(\frac{k}{2}+1\right)}\, ,
\]
where $\Gamma(\cdot)$ denotes as usual the Gamma function.

Inequality (\ref{affineSobolev}) was established by Zhang \cite{Z} for $p = 1$ and Lutwak, Yang and Zhang \cite{L-Y-Z-1} for $1 < p < n$. The latter also showed that

\[
\mathcal{E}_p(f) \leq \left( \int_{\R^n}|\nabla f(y)|^p dy \right)^{\frac{1}{p}}\, ,
\]
implying readily that (\ref{affineSobolev}) is stronger than (\ref{Sobolev}). These results are rather surprising, since it was not at all expected that the classical $L^p$ Sobolev inequalities would admit {\it affine} versions that are independent of any euclidean structure. Parallel to the classical case $p=1$, the sharp {\it affine} $L^1$ Sobolev inequality is equivalent to an isoperimetric inequality called Petty projection inequality which was shown by Zhang \cite{Z}. The proof for $1 < p < n$ bases on a family of $L_p$ {\it affine} isoperimetric inequalities established by Lutwak, Yang and Zhang \cite{L-Y-Z}, known as the $L_p$ Petty projection inequalities, and the solution to the $L_p$ Minkowski problem by the same authors (see \cite{L-Y-Z-2}). A central tool obtained in \cite{L-Y-Z} and used in the proof of the $L_p$ Petty projection inequalities is the $L_p$ Busemann-Petty centroid inequality. The latter generalizes the classical Busemann-Petty centroid inequality due to Petty \cite{Petty} (see also \cite{C-G} for an alternative proof) that compares the ratio between the volume of a convex body and that of its centroid body. For other {\it affine} isoperimetric and functional inequalities all of which directly imply their euclidean counterparts, see, e.g., Cianchi, Lutwak, Yang and Zhang \cite{CLYZ}, Lutwak, Yang and Zhang \cite{L-Y-Z-3}, Haberl and Schuster \cite{HaSc1}, \cite{HaSc2}, Haberl, Schuster and Xiao \cite{HSX}, Ludwig, Xiao and Zhang \cite{LXZ} and Wang \cite{W1}, \cite{W2}. Inequality (\ref{affineSobolev}) has been proved by using an alternative method introduced recently by Haddad, Jiménez and Montenegro \cite{HJM} which uses the $L_p$ Busemann-Petty centroid inequality as a fundamental tool and has the advantage of not depending on the solution to the $L_p$ Minkowski problem. The efficiency of method is also illustrated in \cite{HJM} with an alternative proof of the well known sharp {\it affine} $L^p$ Gagliardo-Nirenberg and log-Sobolev inequalities.

The main motivation of the present work is to introduce sharp {\it affine} weighted Sobolev type inequalities on $\R^n_+$. More specifically, affine weighted $L^p$ Sobolev, Gagliardo-Nirenberg and log-Sobolev inequalities with sharp constants, which are significantly stronger than and imply the corresponding classical ones established recently by Nguyen \cite{Ng}. This demands a different approach from those developed in \cite{CRS1}, \cite{CRS2} and \cite{Ng} for the proof of the inequality (\ref{eucineq-p}) on half spaces and in \cite{L-Y-Z} and \cite{Z} for the proof of the {\it affine} inequality (\ref{affineSobolev}) on the whole space. The new inequalities again are independent with respect to the fixed norm in the euclidean $n$-space, that is, they depend only on the vector space structure and Lebesgue measure of $\R^n$, so that they are invariant under all affine transformations of $\R^n_+$. This fact is quite interesting since the inequality (\ref{eucineq-p}) relies heavily on the euclidean geometric structure of $\R^n$.
Our argument, as opposed to those used in proofs of the inequalities mentioned above, does not follow from neither symmetrization and transport arguments nor the $L_p$ Petty projection inequality and the solution to the $L_p$ Minkowski problem. The essential isoperimetric inequality behind our approach is the $L_p$ Busemann-Petty centroid inequality by Lutwak, Yang and Zhang. As pointed out by Lutwak in \cite{Lut}, the close connection between the Petty projection and the Busemann-Petty inequalities makes the use of the latter in this type of approach far from surprising.
The second fundamental tool in our proof is the version of the inequality (\ref{eucineq-p}) and  others obtained by Nguyen \cite{Ng} for general norms of gradients.

Our technique reveals in an explicit and elementary way the geometric nature behind {\it affine} inequalities. A connection between Sobolev type inequalities for general norms and {\it affine} Sobolev type inequalities via the $L_p$ Busemann-Petty centroid inequality can be traced back to \cite{L-Y-Z-3} where the authors studied the equivalence between the $L_p$ Minkowski problem and the norms that minimize the right-hand side of the normed Sobolev inequalities obtained in \cite{CNV}.

In order to state the main theorems, some notations should be introduced.

Let $n \geq 2$ and $p \geq 1$. We shall denote each point of the whole space $\R^n$ or half-space $\Rnm = \R_+ \times \R^{n-1}$ by $(t, x)$ or $y$ and each point of the euclidean sphere $\s^{n-2}$ in $\R^{n-1}$ by $\xi$.

For smooth functions $f(t, x)$ with compact support on $\R^n$, we denote by $\ft$ and $\partialgradient f$ respectively the partial derivative with respect to the variable $t$ and the gradient with respect to the variable $x$. We also stands for $\partialgradient_\xi f$ the directional derivative of $f$ with respect to the second variable in the direction of $\xi \in \s^{n-2}$, namely $\partialgradient_\xi f = \partialgradient f \cdot \xi$.

Let $a \geq 0$ and consider the function $\omega$ on $\Rnm$ defined by $\omega(t, x) = t^a$. For $1 \leq p < n + a$, we consider the weighted space $\LW{p}$ endowed with the following norm
\[
\|f\|_{\LW p} = \left( \int_{\Rnm} |f(t, x)|^p \omega(t, x) dt\ dx \right)^{\frac{1}{p}}\, .
\]

We introduce the new integral term
\begin{eqnarray*}
\EE &=&c_{n-1,p} \left( \int_{\s^{n-2}} \| \partialgradient_\xi f\|_\LW{p}^{1-n} d\xi \right) ^{\frac 1 {1 - n}} \\
&=& c_{n-1,p} \left(\int_{\s^{n-2}} \left( \int_{\R^n_+} |\partialgradient_\xi f(y)|^p \omega(y) dy \right)^{-\frac {n-1}p} d\xi \right)^{-\frac 1{n-1}}\, ,
\end{eqnarray*}
where $c_{n,p}$ was introduced above.

In the sequel, we state the three main theorems of this work. The first one deals with the affine counterpart of the sharp weighted $L^p$ Sobolev inequality (\ref{eucineq-p}).

\begin{theorem}
\label{mainthmSob}
Let $a \geq 0$ and $1 \leq p < n + a$. For any smooth function $f$ with compact support on $\R^n$, we have
\begin{equation}
\label{mainthmSob_ineq}
\|f\|_\LW{p_a^*} \leq \mathcal S_{n,p,a}  \EE^{\frac{n-1}{n+a}} \TT^{\frac{1+a}{n+a}}\, ,
\end{equation}
where $\mathcal S_{n,p,a}$ is sharp for this inequality and its value is computed in the appendix A. Moreover, equality holds in \eqref{mainthmSob_ineq} if

\[
f(t,x) = \left\{
\begin{array}{lll}
c \left(1+ |\lambda t|^\frac{p+1}{p} + |B(x-x_0)|^\frac{p+1}{p} \right)^{-\frac {n + a -p} p }  && {\rm if} \ \ p>1\\
c {\bf 1}_\B( \lambda t, B(x-x_0)) &&{\rm if} \ \ p=1
\end{array}
\right.
\]
for some constants $c \in \R$, $\lambda \neq 0$, $x_0 \in \R^{n-1}$ and $B \in \gl_{n-1}$, where ${\bf 1}_\B$ stands for the characteristic function of the unit ball $\B$ centered at the origin and $\gl_{n-1}$ denotes the set of invertible real $(n-1) \times (n-1)$-matrices. Moreover, the characterization with above extremals holds for $p > 1$.
\end{theorem}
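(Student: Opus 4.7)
The plan is to combine Nguyen's sharp weighted $L^p$ Sobolev inequality \cite{Ng} with the $L^p$ Busemann--Petty centroid inequality of Lutwak--Yang--Zhang \cite{L-Y-Z}, adapting to the weighted half-space setting the template used in \cite{HJM} for the unweighted affine Sobolev inequality on $\R^n$. The first step is to pass from Nguyen's Euclidean statement~\eqref{eucineq-p} to its ``non-euclidean'' version in the $x$-variable: applying Nguyen's inequality for the mixed $\ell_p$-norm $N_{\lambda,K}(\tau,v)=(|\lambda\tau|^p+h_K(v)^p)^{1/p}$ on $\R\times\R^{n-1}$, one obtains the family of sharp inequalities
\begin{equation*}
\|f\|_\LW{p_a^*} \leq \mathcal S_{n,p,a}\,\vol(K)^{-\frac{1}{n+a}}\lambda^{-\frac{1+a}{n+a}}\Bigl(\lambda^p\TT^{p}+\int_{\R^n_+}h_K(\partialgradient f(y))^p\omega(y)\,dy\Bigr)^{1/p},
\end{equation*}
one for each origin-symmetric convex body $K\subset\R^{n-1}$ and each $\lambda>0$. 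The special case $K=B\B^{n-1}$, $B\in\gl_{n-1}$, reduces to~\eqref{eucineq-p} by the change of variables $t\mapsto\lambda^{-1}t$, $x\mapsto B^{-1}x$, and general centered convex $K$ is reached by approximation. The extremizers of this intermediate inequality are the Bliss-type profiles of the form stated in the theorem.

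Next, I freeze $\lambda$ and optimize the right-hand side over $K$. Pushing $|\partialgradient f(y)|^p\omega(y)\,dy$ forward under $y\mapsto\partialgradient f(y)/|\partialgradient f(y)|$ produces a finite even measure $\mu_f$ on $\s^{n-2}$ satisfying
\begin{equation*}
\int_{\R^n_+}h_K(\partialgradient f(y))^p\omega(y)\,dy=\int_{\s^{n-2}}h_K(\eta)^p\,d\mu_f(\eta),
\end{equation*}
so the $K$-dependent factor becomes the scale-invariant functional $\vol(K)^{-1/(n+a)}\bigl(\int h_K^p\,d\mu_f\bigr)^{(n-1)/(p(n+a))}$. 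Minimizing it among centered convex bodies is precisely the variational problem solved by the $L^p$ Busemann--Petty centroid inequality, whose equality case is the centered ellipsoid generating the $L^p$ zonoid $Z_p\mu_f$ (with support function $h_{Z_p\mu_f}(\xi)=\|\partialgradient_\xi f\|_\LW p$). Rewriting the resulting minimum through the polar-coordinate identity $\int_{\s^{n-2}}h_{Z_p\mu_f}(\xi)^{1-n}d\xi=(n-1)\vol((Z_p\mu_f)^{\circ})$ identifies the infimum with $\EE^{(n-1)/(n+a)}$ up to an explicit universal constant; the factor $c_{n-1,p}$ in the definition of $\EE$ appears precisely to absorb the constant produced by the Busemann--Petty step.

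Finally, optimizing in $\lambda>0$ is an elementary Young-type exercise: the residual expression $\lambda^{-(1+a)/(n+a)}(\lambda^p\TT^p+C\,\EE^p)^{1/p}$ is minimized at $\lambda^p=(1+a)C\EE^p/((n-1)\TT^p)$, converting the two-term sum into the weighted geometric mean $\EE^{(n-1)/(n+a)}\TT^{(1+a)/(n+a)}$ and producing~\eqref{mainthmSob_ineq}. The extremizers arise by combining the Bliss profiles of Step~1 with the ellipsoidal equality case of Busemann--Petty: taking $K=B\B^{n-1}$ exchanges $h_K(x-x_0)$ for $|B(x-x_0)|$ and reproduces the family stated in the theorem, uniquely so for $p>1$ where both underlying sharp inequalities are strict off their equality sets. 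The most delicate part of the argument is Step~2: identifying the optimal $K$ with an $L^p$ centroid body of $\mu_f$, matching the scale-invariant functional with the correct form of the Lutwak--Yang--Zhang inequality, and tracking constants so that the sharp factor $\mathcal S_{n,p,a}$ survives intact through the full optimization.
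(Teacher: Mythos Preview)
Your proposal is correct and follows essentially the same route as the paper: both combine Nguyen's weighted $L^p$ Sobolev inequality for general norms with the $L_p$ Busemann--Petty centroid inequality, the only difference being that the paper fixes the optimal norm $C_f$ in advance (its $x$-part is built so that $K_{f,0}$ is a dilate of $\Gamma_p L_f$, and the constant $\alpha_f$ already encodes the optimal balance between the $t$- and $x$-terms), whereas you parametrize all product norms $N_{\lambda,K}$ and optimize over $(\lambda,K)$ afterwards. Two small points to tighten: in Step~1, the passage ``general centered convex $K$ is reached by approximation'' cannot work starting from ellipsoids alone, but this is moot since Nguyen's theorem (stated here as Theorem~\ref{nguyenthmSob}) is already formulated for arbitrary norms and can be invoked directly; and in Steps~2--3 the scale-invariant functional $\vol(K)^{-1/(n+a)}\bigl(\int h_K^p\,d\mu_f\bigr)^{(n-1)/(p(n+a))}$ only emerges \emph{after} one first optimizes in $\lambda$ (or, equivalently, in the scale of $K$), so the order you describe should be swapped---once that is done the residual $\lambda$-optimization becomes vacuous rather than a separate Young step.
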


Remark that Inequality \eqref{mainthmSob_ineq} is invariant under affine transformations of $\Rnm$. In precise terms, denote by $\gl_{n,+}$ the set of matrices of the form
\begin{equation}
\label{mainthmSob_invariantmatrix}
\left(
\begin{array}{cccc}
\lambda & 0 & \cdots & 0\\
0       &   &        &  \\
\vdots  &   &   B    &  \\
0       &   &        &
\end{array}
\right)
\end{equation}
where $\lambda > 0$ and $B \in \gl_{n-1}$. Then, Inequality \eqref{mainthmSob_ineq} is $\gl_{n,+}$ invariant. In particular, it does not depend on the
euclidean structure of $\R^n$. Note also that the functions ${\bf 1}_\B( \lambda t, B(x-x_0))$ are characteristic functions of ellipsoids.

As a consequence of Young's inequality we get the following consequence of Theorem \ref{mainthmSob}:

\begin{corollary}
\label{conseqaffSoblev}
Let $a \geq 0$ and $1 \leq p < n + a$. For any smooth function $f$ with compact support on $\R^n$, we have

\begin{equation}
\label{mainthmSob_ineq_2}
\|f\|_\LW{p_a^*} \leq \mathcal K_{n,p,a} \left( \EE^p + \TT^p \right)^{\frac{1}{p}}\, ,
\end{equation}
where $\mathcal K_{n,p,a} = \mathcal S_{n,p,a} (1+a)^{\frac{1+a}{p (n+a)}} (n-1)^{\frac{n-1}{p (n+a)}} (n+a)^{-\frac{1}{p}}$ is sharp for this inequality. Moreover, equality holds in \eqref{mainthmSob_ineq_2} if

\[
f(t,x) = \left\{
\begin{array}{lll}
c \left(1+ |\lambda_B t|^\frac{p+1}{p} + |B(x-x_0)|^\frac{p+1}{p} \right)^{-\frac {n + a -p} p }  && {\rm if} \ \ p>1\\
c {\bf 1}_\B( \lambda_B t, B(x-x_0)) &&{\rm if} \ \ p=1
\end{array}
\right.
\]
for some constant $c \in \R$, $x_0 \in \R^{n-1}$ and $B \in \gl_{n-1}$, where $\lambda_B = \det(B)^{\frac 1{n-1}}$. Moreover, the characterization with above extremals holds for $p > 1$.

\end{corollary}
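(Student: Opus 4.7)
The plan is to derive \eqref{mainthmSob_ineq_2} from Theorem~\ref{mainthmSob} via a one-line Young-type inequality and then match the equality cases. For $A,B\ge 0$ and $\alpha,\beta>0$ with $\alpha+\beta=1$, parametrizing $u=A^p/(A^p+B^p)$ gives
\[
A^\alpha B^\beta = u^{\alpha/p}(1-u)^{\beta/p}(A^p+B^p)^{1/p}
\leq \alpha^{\alpha/p}\beta^{\beta/p}(A^p+B^p)^{1/p},
\]
the maximum of $u\mapsto u^{\alpha/p}(1-u)^{\beta/p}$ on $[0,1]$ being attained at $u=\alpha$, with equality iff $A^p/B^p=\alpha/\beta$. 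Applying this with $A=\EE$, $B=\TT$, $\alpha=(n-1)/(n+a)$, $\beta=(1+a)/(n+a)$, and substituting into \eqref{mainthmSob_ineq}, yields \eqref{mainthmSob_ineq_2} with constant $\mathcal S_{n,p,a}\,\alpha^{\alpha/p}\beta^{\beta/p}$. Using $\alpha+\beta=1$ to write $(n+a)^{-1/p}=(n+a)^{-\alpha/p}(n+a)^{-\beta/p}$, a short algebraic rearrangement identifies this prefactor with the stated $\mathcal K_{n,p,a}$.

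For sharpness and the extremal form, both \eqref{mainthmSob_ineq} and the Young step must saturate simultaneously. The first forces $f$ to belong to the family of Theorem~\ref{mainthmSob}, parametrized by $c,\lambda,B,x_0$; the second imposes $\EE^p/\TT^p=(n-1)/(1+a)$. I would first record how these quantities transform under the diagonal action of $\gl_{n,+}$: a change of variables $(t,x)\mapsto(\lambda t, Bx)$ together with the standard spherical identity
\[
\int_{\s^{n-2}}\phi\!\left(B\xi/|B\xi|\right)|B\xi|^{-(n-1)}\,d\xi = |\det B|^{-1}\int_{\s^{n-2}}\phi(\eta)\,d\eta
\]
shows that $\EE^p$ and $\TT^p$ transform respectively by $\lambda^{-(1+a)}(\det B)^{-1+p/(n-1)}$ and $\lambda^{p-1-a}(\det B)^{-1}$, so their ratio scales as $\lambda^{-p}(\det B)^{p/(n-1)}$. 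It then suffices to verify that the canonical extremizer
\[
f_0(t,x)=\bigl(1+t^{(p+1)/p}+|x|^{(p+1)/p}\bigr)^{-(n+a-p)/p}
\]
(corresponding to $\lambda=1$, $B=I$, $x_0=0$) satisfies $\EE^p(f_0)/\TT^p(f_0)=(n-1)/(1+a)$; by rotational symmetry in $x$ this reduces to a one-dimensional ratio of $t^a$-weighted integrals evaluable via Beta-function identities. Once this balance is established, the Young-equality condition for a general extremizer parametrized by $(\lambda,B)$ becomes precisely $\lambda^p=(\det B)^{p/(n-1)}$, i.e.\ $\lambda=\lambda_B$.

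The uniqueness statement for $p>1$ is then immediate: it follows from the corresponding uniqueness in Theorem~\ref{mainthmSob} together with the strict equality case of the Young step (strict whenever $\alpha,\beta>0$). The only genuinely non-mechanical step is the verification that $f_0$ balances $\EE^p$ and $\TT^p$ in the exact ratio $(n-1)/(1+a)$; everything else is algebraic bookkeeping around Young and the $\gl_{n,+}$ scaling formulas.
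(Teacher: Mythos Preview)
Your approach is correct and matches the paper's: both derive \eqref{mainthmSob_ineq_2} by applying the weighted AM--GM/Young inequality to the product $\EE^{(n-1)/(n+a)}\TT^{(1+a)/(n+a)}$ in \eqref{mainthmSob_ineq}, obtaining the same constant $\mathcal K_{n,p,a}$, and both trace the extremals back to those of Theorem~\ref{mainthmSob} subject to the Young equality constraint.

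The one point of divergence is how sharpness (equivalently, the balance $\EE^p/\TT^p=(n-1)/(1+a)$ at the canonical extremizer) is established. You propose computing this ratio directly for $f_0$ via Beta integrals; this is feasible but not entirely mechanical, since $\EE(f_0)$ involves the constant $c_{n-1,p}$ and an average over $\s^{n-2}$. The paper sidesteps this computation: it observes that with $C(t,x)=(|t|^q+|x|^q)/q$ Nguyen's inequality becomes \eqref{prf_nguyen_euclidean_ineq} with the \emph{same} constant $\mathcal K_{n,p,a}$, and that \eqref{mainthmSob_ineq_2} together with $\EE\leq\GG$ implies \eqref{prf_nguyen_euclidean_ineq}; hence any extremizer $f_0$ of \eqref{prf_nguyen_euclidean_ineq} is automatically an extremizer of \eqref{mainthmSob_ineq_2}, with no ratio to verify. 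Your scaling analysis (the $\gl_{n,+}$ transformation laws you record agree with the paper's Appendix~B) then finishes the job either way, yielding $\lambda=\det(B)^{1/(n-1)}$. The uniqueness argument for $p>1$ is identical in both.
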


The sharp affine weighted $L^p$ Gagliardo-Nirenberg inequality states that

\begin{theorem}

\label{mainthmGNS}
Let $a \geq 0$, $1 \leq p < n + a$ and $\alpha \in (0, \frac {n + a}{n + a-p}]$ with $\alpha \neq 1$. For any smooth function $f$ with compact support on $\R^n$, we have:
\begin{enumerate}
\item[(a)]
For $\alpha > 1$,
\begin{equation}
\label{mainthmGNS_ineq1}
 \|f\|_{\LW{\alpha p}} \leq \left( \mathcal G_{n,p,a,\alpha}  \EE^{\frac{n-1}{n+a}} \TT^{\frac{1+a}{n+a}} \right)^{\theta} \|f\|^{1-\theta}_{\LW{\alpha(p-1)+1} }\, ,
\end{equation}
where

\[
\theta = \frac {(n + a)(\alpha-1)}{\alpha(n p + ap -(\alpha p + 1 -\alpha)(n+a - p))} \in (0,1)
\]
and $\mathcal G_{n,p,a,\alpha}$ is sharp for this inequality and its value is computed in the appendix A. Moreover, equality holds in \eqref{mainthmGNS_ineq1} if

\[
f(t,x) = \left\{
\begin{array}{lll}
c ( 1 + |\lambda t|^{\frac p{p-1}}+|B (x-x_0)|^{\frac p{p-1}})^{\frac{1}{1-\alpha}}  && {\rm if} \ \ p>1\\
c {\bf 1}_\B( \lambda t, B(x-x_0)) &&{\rm if} \ \ p=1
\end{array}
\right.
\]
for some $c \in \R$, $\lambda \neq 0$, $x_0 \in \R^{n-1}$ and $B \in \gl_{n-1}$;

\item[(b)]
For $\alpha < 1$,
\begin{equation}
\label{mainthmGNS_ineq2}
\|f\|_{\LW{\alpha(p-1)+1} } \leq \left( \mathcal N_{n,p,a,\alpha} \EE^{\frac{n-1}{n+a}} \TT^{\frac{1+a}{n+a}} \right)^{\theta} \|f\|^{1-\theta}_{\LW{\alpha p}}\, ,
\end{equation}
where

\[
\theta = \frac {(n+a)(1-\alpha)}{(\alpha p + 1 -\alpha)(n - \alpha(n-p))} \in (0,1)
\]
and $\mathcal N_{n,p,a,\alpha}$ is sharp for this inequality and its value is computed in the appendix A. Moreover, equality holds in \eqref{mainthmGNS_ineq2} if

\[
f(t,x) = \left\{
\begin{array}{lll}
c ( 1 - |\lambda t|^{\frac p{p-1}}-|B (x-x_0)|^{\frac p{p-1}})_+^{\frac{1}{1-\alpha}}  && {\rm if} \ \ p>1\\
c {\bf 1}_\B( \lambda t, B(x-x_0)) &&{\rm if} \ \ p=1
\end{array}
\right.
\]
for some $c \in \R$, $\lambda \neq 0$, $x_0 \in \R^{n-1}$ and $B \in \gl_{n-1}$. Here $f_+$ denotes the positive part of $f$.
\end{enumerate}
\end{theorem}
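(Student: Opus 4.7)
The plan is to adapt the two-step strategy of \cite{HJM} (the same one used in the proof of Theorem \ref{mainthmSob}) to the Gagliardo-Nirenberg setting. The two ingredients are Nguyen's sharp weighted $L^p$ Gagliardo-Nirenberg inequalities from \cite{Ng}, rewritten for an arbitrary norm on the $x$-variable, together with the $L_p$ Busemann-Petty centroid inequality of Lutwak, Yang and Zhang \cite{L-Y-Z}, used to optimize over such norms.

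First I would record the ``normed'' version of Nguyen's inequality. For a centered convex body $K\subset\R^{n-1}$ with gauge $\|\cdot\|_K$ and polar $K^\circ$, the mass transport argument in \cite{Ng} applies verbatim with the Euclidean norm on the $x$-factor replaced by $\|\cdot\|_{K^\circ}$ and the absolute value kept on $t$, producing for $\alpha>1$ an inequality of the shape
\[
\|f\|_\LW{\alpha p}\leq \bigl(A_{n,p,a,\alpha}\vol(K)^{\mu}\bigr)^{\theta}\left(\int_{\Rnm}\bigl(\|\partialgradient f\|_{K^\circ}^p+|\partial_t f|^p\bigr)\omega(y)\,dy\right)^{\theta/p}\|f\|^{1-\theta}_\LW{\alpha(p-1)+1}
\]
with explicit universal constant $A_{n,p,a,\alpha}$ and scaling exponent $\mu$, the extremizers being the Aubin-type profiles that are $\|\cdot\|_{K^\circ}$-radial in $x$; the analogous reversed inequality holds for $\alpha<1$ with compactly supported truncated profiles.

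Second, I would minimize the right-hand side over all centered convex bodies $K$. The map $v\mapsto\bigl(\int_{\Rnm}|\partialgradient f\cdot v|^p\omega\,dy\bigr)^{1/p}$ defines (for generic $f$) a norm on $\R^{n-1}$ whose unit ball is, up to rescaling, the $L_p$-centroid body $K_f$ of the measure associated to $\partialgradient f$. A Fubini computation expresses $\int_{\Rnm}\|\partialgradient f\|_{K^\circ}^p\omega\,dy$ in terms of the mixed-volume pairing of $K$ with $K_f$, while $\EE$ is read as a dual-volume functional of $K_f$. The $L_p$ Busemann-Petty centroid inequality then asserts that this pairing, suitably normalized by $\vol(K)^{-\mu p/\theta}$, is minimized precisely when $K$ is a centered ellipsoid, with minimum a fixed constant multiple of $\EE^{p}$. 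Substituting the minimizer back into the normed Nguyen inequality and separating $\GG$ from $\TT$ by a scaling $t\mapsto\lambda t$ (equivalently, a Young inequality with optimal parameter) in the ratio $(n-1):(1+a)$ yields \eqref{mainthmGNS_ineq1}; case (b) follows in the same way from the $\alpha<1$ counterpart.

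\emph{The main obstacle} I expect is bookkeeping of the sharp constant: the value of $\mathcal G_{n,p,a,\alpha}$ (resp.\ $\mathcal N_{n,p,a,\alpha}$) must emerge by matching Nguyen's constant through the two rescalings, which requires computing $\vol(K)^{\mu}$ at the ellipsoidal optimizer and combining it with the normalization in $c_{n-1,p}$. Once the constant matches, the equality characterization for $p>1$ is immediate from the two equality cases: Busemann-Petty forces $K_f$ to be a centered ellipsoid, so $f$ is affinely equivalent in the $x$-variable to an $x$-radial function, and Nguyen's extremizer identification then forces the full profile to be the claimed Aubin-type function $c(1+|\lambda t|^{p/(p-1)}+|B(x-x_0)|^{p/(p-1)})^{1/(1-\alpha)}$ in case (a), or its truncated analogue in case (b).
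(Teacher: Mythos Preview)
Your proposal is correct and follows essentially the same route as the paper: apply Nguyen's normed weighted Gagliardo--Nirenberg inequality (Theorem~\ref{nguyenthmGNS}) and then optimize the norm on the $x$-variable via the $L_p$ Busemann--Petty centroid inequality, balancing the $t$- and $x$-contributions by a one-parameter scaling. The only difference is organizational: the paper packages both optimizations into a single application of Lemma~\ref{main_ineq} by choosing upfront the specific function $C_f^*(t,x)=\tfrac{\alpha_f}{p}|t|^p+D_f^*(x)$ (with $\alpha_f$ already tuned to the ratio $(1+a):(n-1)$), whereas you describe the $K$-optimization and the $t$-scaling as separate steps; also note that the unit ball of $v\mapsto\|\partialgradient_v f\|_{\LW p}$ is $L_f$, and it is $\Gamma_p L_f$ that enters through the dual construction in Lemma~\ref{lem_K_L}, so be careful with the terminology when you carry out the computation.
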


The sharp affine weighted $L^p$ entropy inequality asserts that

\begin{theorem}
\label{mainthmLog}
Let $a \geq 0$ and $p \geq 1$. For any smooth function $f$ with compact support on $\R^n$ such that $||f||_\LW{p} = 1$, we have
\begin{equation}
\label{mainthmLog_ineq}
Ent_\omega(|f|^p) := \int_\Rnm |f(y)|^p \log(|f(y)|^p) \omega(y) dy \leq \frac{n + a}p \log\left[ \mathcal L_{n,p,a}  \left( \EE^{\frac{n-1}{n+a}} \TT^{\frac{1+a}{n+a}}\right)^p \right]\, ,
\end{equation}
where $\mathcal L_{n,p,a}$ is sharp for this inequality and its value is computed in the appendix A. Moreover, equality holds in \eqref{mainthmLog_ineq} if

\[
f(t,x) = \left\{
\begin{array}{lll}
c e^{-|\lambda t|^\frac{p}{p-1} - |B(x-x_0))|^\frac{p}{p-1}}  && {\rm if} \ \ p>1\\
c {\bf 1}_\B( \lambda t, B(x-x_0)) &&{\rm if} \ \ p=1
\end{array}
\right.
\]
for some $c \in \R$, $\lambda \neq 0$, $x_0 \in \R^{n-1}$ and $B \in \gl_{n-1}$ such that $||f||_\LW{p} = 1$.
\end{theorem}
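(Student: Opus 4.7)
The plan is to mirror the strategy outlined in the introduction for Theorem~\ref{mainthmSob}: combine a one-parameter family of non-affine weighted $L^p$ log-Sobolev inequalities (derived from Nguyen's work \cite{Ng}) with the $L_p$ Busemann--Petty centroid inequality of \cite{L-Y-Z}.

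The first step is to produce the family. Fix $B \in \gl_{n-1}$, assume $\|f\|_{\LW p}=1$, and apply Nguyen's Euclidean weighted log-Sobolev inequality to $g(t,x):=|\det B|^{1/p}f(t,Bx)$, which also has unit $\LW p$-norm. The substitution $(t,y)=(t,Bx)$ preserves $\omega(t,x)=t^a$ and transforms the $x$-gradient via $\partialgradient g(t,x)=|\det B|^{1/p}B^T(\partialgradient f)(t,Bx)$; tracking the entropy, the gradient norms and the normalization constants gives, for every $B$,
\[
Ent_\omega(|f|^p)\leq\frac{n+a}{p}\log\!\left[\mathcal L_0\,|\det B|^{-\frac{p}{n+a}}\!\left(\big\| |B^{T}\partialgradient f|\big\|_{\LW{p}}^{\frac{n-1}{n+a}}\TT^{\frac{1+a}{n+a}}\right)^{p}\right],
\]
where $\mathcal L_0$ denotes Nguyen's sharp Euclidean constant.

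The second step is to minimize the right-hand side over $B\in\gl_{n-1}$, which by monotonicity of the logarithm reduces to minimizing the scale-invariant ratio $Q(B):=|\det B|^{-1/(n-1)}\||B^T\partialgradient f|\|_{\LW p}$. Using the polar representation $|w|^p=c_{p,n-1}\int_\S|w\cdot\xi|^p d\xi$ and Fubini expresses $Q(B)^p$ as an integral of $\|\partial_\xi f\|_{\LW p}^p$ against a pushforward measure on $\S$. The $L_p$ Busemann--Petty centroid inequality, applied to the $L_p$-moment body $\ZZ$ whose support function is $h_{\ZZ}(\xi)=\|\partial_\xi f\|_{\LW p}$, identifies
\[
\inf_{B\in\gl_{n-1}}Q(B)\;=\;\gamma\left(\int_\S\|\partial_\xi f\|_{\LW p}^{1-n}\,d\xi\right)^{1/(1-n)}\;=\;\frac{\gamma}{c_{n-1,p}}\,\EE
\]
for an explicit constant $\gamma$, with equality exactly when $\ZZ$ is a centered ellipsoid. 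Substituting this infimum back produces the sharp affine inequality \eqref{mainthmLog_ineq}; the appendix computation of $\mathcal L_{n,p,a}$ then amounts to collecting the constants $\mathcal L_0$, $c_{n-1,p}$ and $\gamma$ into a single closed form.

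The equality case is a simultaneous-saturation check. Nguyen's Euclidean log-Sobolev inequality is saturated (for $p>1$) by the standard Gaussian $ce^{-|\lambda t|^{p/(p-1)}-|x|^{p/(p-1)}}$ and (for $p=1$) by $c\,{\bf 1}_\B(\lambda t,x-x_0)$, while the Busemann--Petty step is saturated when $\ZZ$ is a centered ellipsoid, i.e.\ when the $x$-level sets of $f$ are centered ellipsoids. The displayed extremal $f(t,x)=ce^{-|\lambda t|^{p/(p-1)}-|B(x-x_0)|^{p/(p-1)}}$ has ellipsoidal $x$-level sets and, after composition with $B^{-1}$ and translation, reduces to the Gaussian extremal of Nguyen, so both ingredients are simultaneously saturated. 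The main obstacle will be bookkeeping the constants: verifying that the two powers of $|\det B|$ (one from the normalization of $g$ and one from the $x$-change of variables) combine to give exactly the exponent $-1/(n-1)$ required to activate the $L_p$ Busemann--Petty inequality, and that $\mathcal L_0$, $c_{n-1,p}$ and $\gamma$ collapse into the explicit $\mathcal L_{n,p,a}$ of Appendix~A.
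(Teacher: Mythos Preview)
There is a genuine gap in step two. The identity
\[
\inf_{B\in\gl_{n-1}} Q(B) \;=\; \frac{\gamma}{c_{n-1,p}}\,\EE
\]
does not follow from the $L_p$ Busemann--Petty inequality and is false for generic $f$. What actually holds is the one-sided bound $\inf_B Q(B)\ge \EE$: apply \eqref{stronger} to $g(t,x)=f(t,Bx)$ and use the transformation rules of Appendix~B to obtain $\EE\le Q(B)$ for every $B$. Equality occurs precisely when the body $L_f=\{\xi:\|\partialgradient_\xi f\|_{\LW p}\le 1\}$ is an ellipsoid, which it need not be. Thus your minimization yields only
\[
Ent_\omega(|f|^p)\le\frac{n+a}{p}\log\Bigl[\mathcal L_0\,\Bigl(\inf_B Q(B)\Bigr)^{\frac{(n-1)p}{n+a}}\TT^{\frac{(1+a)p}{n+a}}\Bigr],
\]
which is strictly weaker than \eqref{mainthmLog_ineq} whenever $L_f$ is not an ellipsoid. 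The $L_p$ Busemann--Petty inequality compares $\vol(\Gamma_p K)$ with $\vol(K)$; it does not compute infima over the linear orbit of a body, so its invocation at this step is misplaced.

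The paper's route genuinely requires Nguyen's log-Sobolev theorem for \emph{arbitrary} norms (Theorem~\ref{nguyenthmLog}), not just the Euclidean one. For each $f$ one constructs a specific, generally non-ellipsoidal, function $C_f$ (Definition~\ref{defi}) with
\[
C_f^*(t,x)=\frac{\alpha_f}{p}|t|^p+\int_{\S}\|\partialgradient_\xi f\|_{\LW p}^{1-n-p}\,|\langle x,\xi\rangle|^p\,d\xi .
\]
Applying Theorem~\ref{nguyenthmLog} with $C=C_f$ leaves two terms to control: the energy $\int C_f^*(\fullgradient f)\,\omega$, which evaluates exactly to a multiple of $\ZZ^{1-n}$, and the weighted volume $\int_{(K_f)_+}\omega$. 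Lemma~\ref{lem_K_L} identifies the slice $K_{f,0}$ as a dilate of the centroid body $\Gamma_p L_f$; only at this point does Busemann--Petty enter, via $\vol(\Gamma_p L_f)\ge\vol(L_f)$, converting the volume term directly into $\EE$ (Lemma~\ref{main_ineq}). The tailored norm $C_f$ is precisely what makes the relevant body a centroid body; restricting to ellipsoidal norms, as in your step one, destroys this structure.
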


Following similar ideas used in \cite{L-Y-Z-1}, by using H\"older's inequality and Fubini's theorem we easily get the estimate of $\EE$ for any $p \geq 1$:

\begin{equation} \label{stronger}
\EE \leq \left( \int_{\Rnm} |\tilde{\nabla} f(t, x)|^p \omega(t, x) dt\ dx \right)^{\frac{1}{p}}\, .
\end{equation}
This implies that all inequalities stated in theorems are stronger than their euclidean counterparts. Note also that extremal functions are taken on appropriate extended spaces in each statement. The classical version of Theorem \ref{mainthmGNS} was partially established (Part (a)) in the whole space $\R^n$ for $p > 1$ and $a = 0$ by Del Pino and Dolbeault in \cite{DPDo} and in the half space $\R_+^n$ under general norms for $p > 1$ and $a \geq 0$ by Nguyen \cite{Ng}. The case $p = 1$ follows by letting $p \rightarrow 1^+$ in \eqref{mainthmGNS_ineq1} and \eqref{mainthmGNS_ineq2}, noting that the limits $\mathcal G_{n,1,a,\alpha}$ and $\mathcal N_{n,1,a,\alpha}$ are equal to $\mathcal S_{n,1,a}$ and by evoking Hölder's inequality and the weighted inequality (\ref{mainthmSob_ineq}) for $p=1$. Already the classical version of Theorem \ref{mainthmLog} with $a=0$ was proved in $\R^n$ by Ledoux \cite{Le} and Beckner \cite{Be} for $p = 1$, by Beckner \cite{Be} and Del Pino and Dolbeault \cite{DPDo} for $1 < p < n$ and by Gentil \cite{G} for any $p > 1$ and in $\R_+^n$ for $a \geq 0$ and $p \geq 1$ by Nguyen \cite{Ng} considering abstract norms.

The paper is organized as follows. In section \ref{sec_notation} we fix some notations and present some background in convex analysis to be used along of paper. In section \ref{sec_lemmas} we prove two central tools in our method (Lemmas 3.5 and 3.6). These lemmas together with the $L^p$ Busemann-Petty Centroid inequality are just what we need in the proof of Theorem \ref{mainthmSob} which is presented in section \ref{sec_proof}. Moreover, in this section we provide some additional comments on the essence of the approach and characterization of extremal functions. Reasoning in a similar line, we prove Theorems \ref{mainthmGNS} and \ref{mainthmLog} at the end of section \ref{sec_proof}. For a better organization and reader's convenience we include the appendix section A dedicated to the computation of all sharp constants stated in results and the appendix section B which describes how varies each quantity of theorems under a linear change of coordinates.

\section{Background in Convex Analysis}
\label{sec_notation}

We recall that a convex body $K\subset\R^n$ is a convex compact subset of $\R^n$ with non-empty interior.

For $K\subset\mathbb R^n$ as before, its support function $h_K$ is defined as

$$
h_K(y)=\max\{\langle y, z\rangle :\ z\in K\}\, .
$$
The support function, which describes the (signed) distances of supporting hyperplanes to the origin, uniquely characterizes $K$. We also have the gauge $\|\cdot\|_K$ and radial $r_K(\cdot)$ functions of $K$ defined respectively as

\[
\|y\|_K:=\inf\{\lambda>0 :\  y\in \lambda K\}\, ,\quad y\in\R^n\setminus\{0\}\, ,
\]

\[
r_K(y):=\max\{\lambda>0 :\ \lambda y\in K\}\, ,\quad y\in\R^n\setminus\{0\}\, .
\]
Clearly, $\|y\|_K=\frac{1}{r_K(y)}$. We also recall that $\|\cdot\|_K$ it is actually a norm when the convex body $K$ is symmetric (i.e. $K=-K$). On the other hand, any centrally symmetric convex body $K$ is the unit ball for some norm in $\R^n$.

For $K\subset \R^n$ we define its polar body, denoted by $K^\circ$, by

\[
K^\circ:=\{y\in\R^n :\ \langle y,z \rangle\leq 1\quad \forall z\in K\}\, .
\]
Evidently, $h_K=r_{K^{\circ}}$. It is also easy to see that $(\lambda K)^\circ=\frac{1}{\lambda}K^\circ$ for all $\lambda>0$. A simple computation using polar coordinates shows that

\[
\vol(K)=\frac{1}{n}\int_{\s^{n-1}}r_K^n(y)dy=\frac{1}{n}\int_{\s^{n-1}}\|y\|^{-n}_K dy\, .
\]

For a given convex body $K\subset\R^n$ we find in the literature many associated bodies to it, in particular Lutwak and Zhang introduced \cite{L-Z} for a body $K$ its $\Lp$-centroid body denoted by $\Gamma_pK$. This body is defined by

\[
h_{\Gamma_pK}^p(y):=\frac{1}{a_{n,p}\vol(K)}\int_{K}|\langle y,z\rangle|^p dz\quad \mbox{ for }y\in\R^n\, ,
\]
where

\[
a_{n,p} = \frac{\rho_{n+p}}{\rho_2 \rho_n \rho_{p-1}}\, .
\]

There are some other normalizations of the $\Lp$-centroid body in the literature, the previous one is made so that $\Gamma_p \B^n=\B^n$ for the unit ball in $\R^n$ centered at the origin.

Inequalities (usually affine invariant) that compare the volume of a convex body $K$ and that of an associated body are common in the literature. For the specific case of $K$ and $\Gamma_pK$, Lutwak, Yang and Zhang \cite{L-Y-Z} (see also \cite{C-G} for an alternative proof) came up with what it is known as the $\Lp$ Busemann-Petty centroid inequality, namely
\begin{equation}
\label{not_BPineq}
\vol(\Gamma_pK)\geq \vol(K)\, .
\end{equation}
This inequality is sharp if, and only if, $K$ is a $0$-symmetric ellipsoid. For a comprehensive survey on $\Lp$ Brunn-Minkowski theory and other topics within convex geometry we refer to \cite{Sch} and references therein.

\section{Fundamental lemmas}
\label{sec_lemmas}
Let $C:\R^n\rightarrow \R$ be an even convex function such that $C(x) > 0$ for all $x \neq 0$. Assume also that $C$ is positively $q$-homogeneous with $q>1$, that is

\[
C(\lambda x)=\lambda^q C(x),\quad \forall \lambda\geq 0,\ x\in\R^n\, .
\]
Denote by $C^*$ its Legendre transform defined by

\[
C^*(y) = \sup_{z \in \R^n} \{ \langle y, z \rangle - C(z) \}\, .
\]
One knows that $C^*$ is also even, convex, $p$-homogeneous and $C^*(y) > 0$ for all $y \neq 0$, where $p > 1$ satisfies $\frac{1}{p}+\frac{1}{q}=1$. Equivalently, the assumptions on $C$ can be resumed by saying simply that $C$ is a power $q$ of an arbitrary norm on $\R^n$.

Let $C$ be a function as before. Denote

\[
K_C = \{y \in \R^n :\ C(y) \leq 1 \}\, .
\]
It is easy to see that $K_C$ is a centrally symmetric convex body with non-empty interior in $\R^n$. Moreover, $K_C$ is defined by the norm $\|y\|_{K_C}=C(y)^{\frac{1}{q}}$.

The starting point of our work consists in the following three theorems proved in \cite{Na}. They are the non-euclidean weighted versions of the Sobolev, Gagliardo-Nirenberg and entropy inequalities, respectively. As quoted in the introduction, Gagliardo-Nirenberg inequalities established by Nguyen occur for $p = 1$.

\begin{theorem}
\label{nguyenthmSob}
Let $C$ be an even convex positively $q$-homogeneous function with $q > 1$ such that $C(x) > 0$ for all $x \neq 0$ and let $p > 1$ with $\frac{1}{p} + \frac{1}{q} = 1$. Assume $a \geq 0$ and $p< n+a$. For any smooth function $f$ with compact support on $\R^n$, we have

\begin{equation}
\label{nguyenthmSob_ineq}
\|f\|_\LW{p_a^*} \leq S(n,a,p) \left( \Moma{(K_C)} \right)^{-\frac{1}{n + a}}  \left( \int_\Rnm C^*(\fullgradient f(y)) \omega(y) dy \right)^{\frac{1}{p}}\, ,
\end{equation}
where $p_a^* = \frac{(n + a)p}{n + a-p}$, $(K_C)_+ = K_C \cap \R^n_+$ and

\[
S(n,a,p) = {p^{\frac{1}{p}} q^{\frac{1}{q}} \left(\frac{(p-1)^{p-1}}{(n + a) \left(n + a-p\right)^{p-1}}\right)^{\frac{1}{p}} \left({\frac{\Gamma \left(\frac{n + a}{p}\right) \Gamma \left(\frac{(n + a) (p-1)}{p}+1\right)}{\Gamma \left(n + a\right)}}\right)^{-\frac{1}{n + a}}}
\]
is sharp for this inequality. Moreover, equality holds in \eqref{nguyenthmSob_ineq} if, and only if,

\[
f(t,x) = c h_{p,a}(\lambda t, \lambda (x - x_0))
\]
for some $c \in \R, \lambda > 0$ and $x_0 \in \R^{n-1}$, where $h_{p,a}$ is given by

\[
h_{p,a}(t, x) = \left(1 + C(t, x)\right)^{-\frac {n + a - p} p }.
\]
\end{theorem}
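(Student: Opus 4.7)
The plan is to adapt the mass-transportation method of Cordero-Erausquin, Nazaret and Villani to the weighted half-space setting. After rescaling $f$ so that $\|f\|_{\LW{p_a^*}} = \|h_{p,a}\|_{\LW{p_a^*}}$ and replacing $f$ by $|f|$ (harmless for both sides), I would apply Brenier's theorem to produce a convex potential $\phi$ on $\Rnm$ whose gradient $T=\fullgradient\phi$ pushes the probability density $|f|^{p_a^*}\omega$ forward to $h_{p,a}^{p_a^*}\omega$. Since both measures are supported in $\Rnm$, $T$ sends $\Rnm$ into $\Rnm$ almost everywhere, and the weighted Monge-Amp\`ere equation reads
\[
|f(y)|^{p_a^*}\omega(y)\;=\;h_{p,a}(T(y))^{p_a^*}\,\omega(T(y))\,\det D^2\phi(y).
\]

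The central pointwise estimate is a weighted arithmetic-geometric mean inequality. Writing $y = (t,x)$ so $\omega(T(y))/\omega(y) = (T_1(y)/t)^a$, and applying AM-GM to the $n$ eigenvalues $\lambda_1,\dots,\lambda_n$ of the positive semidefinite matrix $D^2\phi(y)$ together with $a$ copies of the positive scalar $T_1(y)/t$, one obtains
\[
\Bigl(\tfrac{\omega(T(y))}{\omega(y)}\det D^2\phi(y)\Bigr)^{\tfrac{1}{n+a}}
\;\leq\;\frac{1}{n+a}\Bigl(\operatorname{tr}D^2\phi(y) + a\,\tfrac{T_1(y)}{t}\Bigr)
\;=\;\frac{\operatorname{div}_\omega T(y)}{n+a},
\]
where $\operatorname{div}_\omega T := \omega^{-1}\operatorname{div}(\omega T)$ is the $\omega$-weighted divergence.

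Raising Monge-Amp\`ere to the power $\tfrac{1}{n+a}$ (matched to the critical exponent $p_a^*=(n+a)p/(n+a-p)$), multiplying by the compensating factor $|f|^{p_a^*(n+a-1)/(n+a)}\omega^{(n+a-1)/(n+a)}$ and integrating, the AM-GM bound yields
\[
\int_{\Rnm}|f|^{p_a^*}\omega\,dy
\;\leq\;\frac{1}{n+a}\int_{\Rnm}|f|^{p_a^*(n+a-1)/(n+a)}\,h_{p,a}(T(y))^{p_a^*/(n+a)}\,\operatorname{div}_\omega T(y)\;\omega(y)\,dy.
\]
A weighted integration by parts, $\int \operatorname{div}_\omega T\cdot g\,\omega\,dy = -\int T\cdot\fullgradient g\,\omega\,dy$, is legitimate because $f$ has compact support and $\omega = t^a$ annihilates the boundary $\{t=0\}$ when $a>0$ (and reduces to the standard half-space case via even reflection when $a=0$); this moves the derivative onto a power of $|f|$ and produces a cross-term of the form $T\cdot\fullgradient f$.

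Finally, Young's inequality with conjugate exponents $p$ and $q=p/(p-1)$ applied to
\[
-\langle T,\fullgradient f\rangle\;\leq\;\tfrac{1}{p}C^*(-\fullgradient f)+\tfrac{1}{q}C(T)
\]
splits the cross-term into the desired $\int C^*(\fullgradient f)\,\omega\,dy$ and a residual $C(T)$-contribution. Using the explicit form $h_{p,a}=(1+C)^{-(n+a-p)/p}$ together with the Euler identity $y\cdot\fullgradient C(y)=qC(y)$, the residual, after push-forward by $T$, collapses to a multiple of $\int h_{p,a}^{p_a^*}\omega\,dy$, which by normalization equals $\int |f|^{p_a^*}\omega\,dy$ and is absorbed on the left. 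Rearranging gives \eqref{nguyenthmSob_ineq}; the constant obtained is sharp because $f=h_{p,a}$ forces $T=\operatorname{id}$ (equality in AM-GM) and makes Young tight (by the Euler identity). The main obstacle is the exponent bookkeeping that ensures the $C(T)$-residual combines cleanly with the push-forward to cancel against the normalization; once this is verified, saturation of AM-GM imposes $D^2\phi=\lambda I$ with $T_1/t=\lambda$ constant, so $T(t,x)=(\lambda t,\lambda x+b')$ is an affine dilation, which (reading Monge-Amp\`ere back) identifies $f$ with a rescaling $c\,h_{p,a}(\lambda t,\lambda(x-x_0))$ as claimed.
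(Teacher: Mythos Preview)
The paper does not give its own proof of this theorem: it is quoted as a known input, attributed to Nguyen \cite{Ng} (the citation \cite{Na} in the lead-in sentence appears to be a typo for \cite{Ng}), whose proof is precisely the mass-transportation argument you outline. So your proposal is not being compared against an alternative argument in the paper; rather, you are reconstructing the proof from the cited source, and your reconstruction is faithful to Nguyen's method.

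Two small points to tighten. First, your phrase ``$a$ copies of the positive scalar $T_1(y)/t$'' only makes literal sense for integer $a$; what you actually need is the weighted AM--GM inequality
\[
\Bigl(\prod_{i=1}^n \lambda_i\Bigr)^{\tfrac{1}{n+a}}\Bigl(\tfrac{T_1}{t}\Bigr)^{\tfrac{a}{n+a}}
\;\le\;\frac{1}{n+a}\Bigl(\sum_{i=1}^n \lambda_i + a\,\tfrac{T_1}{t}\Bigr),
\]
valid for all $a\ge 0$. Second, in the equality analysis, saturation of this weighted AM--GM forces all $\lambda_i$ and $T_1/t$ to coincide, giving $T(t,x)=(\lambda t,\lambda x + b')$; you should note that the constraint $T(\Rnm)\subset\Rnm$ together with the weight $\omega(t,x)=t^a$ forbids any translation in the $t$-direction, so only the spatial shift $b'$ survives, matching the stated extremals $c\,h_{p,a}(\lambda t,\lambda(x-x_0))$. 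With these adjustments your sketch is correct.
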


\begin{theorem}
\label{nguyenthmGNS}
Let $C$ be an even convex positively $q$-homogeneous function with $q > 1$ such that $C(x) > 0$ for all $x \neq 0$ and let $p > 1$ with $\frac{1}{p} + \frac{1}{q} = 1$. Assume $a \geq 0$, $p < n+a$ and $\alpha \in (0, \frac{n + a}{n + a - p}]$ with $\alpha \neq 1$. For any smooth function $f$ with compact support on $\R^n$, we have:

\begin{enumerate}
\item[(i)] If $\alpha > 1$,

\begin{equation}
\label{nguyenthmGNS_ineq1}
\|f\|_{\LW{\alpha p}} \leq G_{n,a}(\alpha, p)\left(\Moma{(K_C)} \right)^{-\frac \theta {n+a}} \left( \int_\Rnm C^*(\fullgradient f(y)) \omega(y) dy \right)^{\frac\theta p} \|f\|^{1-\theta}_{\LW{\alpha(p-1)+1} }\, ,
\end{equation}
where

\[
\theta = \frac {(n + a)(\alpha-1)}{\alpha(n p + ap -(\alpha p + 1 -\alpha)(n+a - p))} \in (0,1),
\]
and
\[
G_{n,a}(\alpha, p) = \left[\frac{\beta(\alpha-1)^p}{(n+a) q^{p-1}} \right]^{\frac \theta p} \left[\frac{q \beta - n - a}{q \beta}\right]^{\frac 1 {\alpha p}} \left[\frac{\Gamma(\beta)}{\Gamma(\beta-\frac{n+a}{q}) \Gamma(\frac{n+a}{q}+1)}\right]^{\frac \theta {n+a}}
\]
is sharp for this inequality, where $\beta = \frac {\alpha (p-1)+1}{\alpha-1}$.

\item[(ii)] If $\alpha < 1$,

\begin{equation}
\label{nguyenthmGNS_ineq2}
\|f\|_{\LW{\alpha(p-1)+1}} \leq N_{n,a}(\alpha, p) \left( \Moma{(K_C)} \right)^{-\frac \theta {n+a}} \left( \int_\Rnm C^*(\fullgradient f(y)) \omega(y) dy \right)^{\frac\theta p} \|f\|^{1-\theta}_{\LW{\alpha p} }\, ,
\end{equation}
where
\[
\theta = \frac {(n+a)(1-\alpha)}{(\alpha p + 1 -\alpha)(n - \alpha(n-p))} \in (0,1)
\]
and

\[
N_{n,a}(\alpha, p) = \left[\frac{\gamma(1-\alpha)^p}{(n+a) q^{p-1}}\right]^{\frac \theta p} \left[\frac{q \gamma}{q \gamma + n+a}\right]^{\frac {1-\theta} {\alpha p}} \left[ \frac{\Gamma(\gamma+1+\frac{n+a}q)}{\Gamma(\gamma+1) \Gamma(\frac{n+a}q + 1) } \right]^{\frac \theta {n+a}}
\]
is sharp for this inequality, where $\gamma = \frac {\alpha (p-1) +1}{1-\alpha}$.
\end{enumerate}
Moreover, equality in \eqref{nguyenthmGNS_ineq1} and \eqref{nguyenthmGNS_ineq2} holds if

\[
f(t,x) = c h_\alpha(\lambda t, \lambda(x-x_0) )
\]
for some $c \in \R, \lambda > 0$ and $x_0 \in \R^{n-1}$, where $h_\alpha$ is given by
\[
h_\alpha(t,x) = \left( 1 + (\alpha - 1) C(t, x) \right)_+^{\frac 1{1-\alpha}}\, .
\]
\end{theorem}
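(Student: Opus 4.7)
The plan is to run the Cordero-Erausquin-Nazaret-Villani mass-transport argument \cite{CNV}, adapted to the half-space $\Rnm$ with weight $\omega(t,x)=t^a$. Parts (i) and (ii) run in parallel; I detail part (i) ($\alpha>1$), part (ii) being identical save that the extremizer $h_\alpha=(1+(\alpha-1)C)_+^{1/(1-\alpha)}$ is compactly supported on $\{C\leq(1-\alpha)^{-1}\}$, which does not obstruct the transport construction. Assuming $f\geq 0$, I first rescale $f$ affinely and by a scalar so that
\[
\int_\Rnm f^{\alpha p}\omega\,dy=\int_\Rnm h_\alpha^{\alpha p}\omega\,dy=:Z.
\]
By Brenier/McCann there is then a convex $\varphi:\Rnm\to\R$ whose gradient $T=\nabla\varphi$ pushes $f^{\alpha p}\omega/Z$ forward to $h_\alpha^{\alpha p}\omega/Z$; convexity of $\Rnm$ guarantees $T$ sends $\Rnm$ a.e.\ into itself, and the Monge-Amp\`ere identity $f(y)^{\alpha p}\omega(y)=h_\alpha(T(y))^{\alpha p}\omega(T(y))\det D_{\mathrm{ac}}T(y)$ holds almost everywhere.

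The weight is incorporated via weighted AM-GM with weights $1/(n+a)$ on each eigenvalue of $DT(y)$ and weight $a/(n+a)$ on the ratio $T_1(y)/y_1$; the resulting factor $\omega(T)^{1/(n+a)}y_1^{-a/(n+a)}$ cancels the weight contribution of Monge-Amp\`ere, yielding the pointwise bound
\[
(n+a)\bigl(f(y)/h_\alpha(T(y))\bigr)^{\alpha p/(n+a)}\leq \Delta_{\mathrm{ac}}\varphi(y)+a\,T_1(y)/y_1.
\]
Multiply by $f(y)^\gamma\omega(y)$ for a carefully chosen $\gamma$ (corresponding to the interpolation exponent $\theta$) and integrate over $\Rnm$. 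Integration by parts on the $\Delta\varphi$ term produces no boundary contribution at $t=0$ thanks to $\omega=t^a$, and the bulk term from $\nabla\omega=at^{a-1}e_1$ combines with the $aT_1/y_1$ piece to collapse into $\int\nabla\varphi\cdot\nabla(f^{\gamma+1})\omega\,dy$ up to constants. Young's inequality for the Legendre pair $(C,C^*)$, namely $\nabla\varphi\cdot\nabla f\leq \mu^q C(\nabla\varphi)+\mu^{-p}C^*(\nabla f)$, together with the pushforward identity $\int C(\nabla\varphi)f^{\alpha p}\omega\,dy=\int C(z)h_\alpha^{\alpha p}(z)\omega(z)\,dz$ (a fixed Beta-type integral entering the numerical constant in $G_{n,a}(\alpha,p)$), and optimization over $\mu>0$, recombine the remaining $f$-dependent terms into the mixed norm on the right of \eqref{nguyenthmGNS_ineq1}. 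Equality throughout forces $DT=\mathrm{Id}$, $T_1/y_1=1$, and equality in Young's, pinning $f$ down as a translate/dilate of $h_\alpha$.

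The main obstacle is two-fold. Analytically, the Brenier map is only a.e.\ twice differentiable (Alexandrov), so the Monge-Amp\`ere identity and the integration by parts must be justified using its absolutely continuous second derivative and the approximation arguments of \cite{CNV}; the genuinely new point is the interaction with the boundary $\{t=0\}$ where $\omega$ vanishes, but this works in our favour by killing boundary terms. Algebraically, the main task is to choose the multiplying power $\gamma$ so that, after optimization over $\mu$ in Young's, the right-hand side assembles into $\|f\|_{\LW{\alpha(p-1)+1}}^{(\alpha(p-1)+1)(1-\theta)}\bigl(\int C^*(\nabla f)\omega\bigr)^\theta$ with the constant simplifying via standard Beta-function identities to exactly $G_{n,a}(\alpha,p)$; this is where the specific exponents $\beta=(\alpha(p-1)+1)/(\alpha-1)$ and $\theta$ appearing in the statement are forced upon us.
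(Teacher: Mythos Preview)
The paper does not supply its own proof of this theorem: it is stated in Section~3 as one of three results ``proved in \cite{Ng}'' (Nguyen), and is used thereafter as a black box in the proofs of the main Theorems~\ref{mainthmSob}--\ref{mainthmLog}. So there is no paper proof to compare against.

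Your sketch is essentially the proof that Nguyen gives, namely the Cordero--Erausquin--Nazaret--Villani mass-transport scheme adapted to $\Rnm$ with weight $t^a$. The key device you identify---splitting the weighted arithmetic--geometric mean with an extra weight $a/(n+a)$ on the ratio $T_1(y)/y_1$ so that the Jacobian factor $\omega(T)/\omega$ is absorbed---is exactly the point of the weighted version, and the subsequent integration by parts and Young/Legendre optimisation are standard. One small correction: when $a=0$ the weight does not vanish at $\{t=0\}$, so the boundary term in the integration by parts is not killed by $\omega$; rather, it carries the correct sign because the Brenier map sends $\Rnm$ into itself (so $T_1\ge 0$) and the outward normal is $-e_1$, making the boundary contribution non-positive and hence harmless for the inequality. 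For $a>0$ your argument that $\omega=t^a$ suppresses the boundary term is fine. With that caveat, your plan matches the cited proof.
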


\begin{theorem}
\label{nguyenthmLog}
Let $C$ be an even convex positively $q$-homogeneous function with $q > 1$ such that $C(x) > 0$ for all $x \neq 0$ and let $p > 1$ with $\frac{1}{p} + \frac{1}{q} = 1$. Assume $a \geq 0$. For any smooth function $f$ with compact support on $\R^n$ such that $\|f\|_{L^p(\Rnm, \omega)} = 1$, we have:

\[
Ent_\omega(|f|^p) \leq \frac{n+a}p \log\left[ L_{n,a}(p) \left( \Moma{(K_C)} \right)^{-\frac p{n+a}} \int_\Rnm C^*(\fullgradient f(y)) \omega(y) dy \right]\, ,
\]
where $L_{n,a}(p) = \frac p{n+a} \left(\frac{p-1}{e}\right)^{p-1} \Gamma \left(\frac{n+a+q}{q}\right)^{-\frac{p}{n+a}} $ and

\[
Ent_\omega(|f|^p) := \int_\Rnm |f(y)|^p \log(|f(y)|^p) \omega(y) dy\, .
\]
Moreover, equality holds if

\[
f(t,x) = b e^{-a C(t, x-x_0)}
\]
for some $a > 0$, $b \in \R$ and $x_0 \in \R^{n-1}$ chosen so that $\|f\|_{L^p(\Rnm, \omega)} = 1$.
\end{theorem}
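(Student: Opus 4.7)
The plan is to derive Theorem \ref{nguyenthmLog} as the limiting case $\alpha \to 1^{+}$ of the sharp weighted Gagliardo--Nirenberg inequality \eqref{nguyenthmGNS_ineq1} from Theorem \ref{nguyenthmGNS}, a classical Beckner-type reduction from a one-parameter family of sharp GN inequalities to the sharp logarithmic Sobolev inequality. This is natural here because the GN extremizers $h_\alpha(t,x) = (1 + (\alpha-1)C(t,x))_+^{1/(1-\alpha)}$ converge pointwise to $e^{-C(t,x)}$ as $\alpha \to 1^{+}$, matching exactly the asserted log-Sobolev extremizer.

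First I would normalize $\|f\|_\LW{p} = 1$, set $\epsilon := \alpha - 1$, take the logarithm of \eqref{nguyenthmGNS_ineq1}, multiply by $p$, and expand each term to first order in $\epsilon$. Using the elementary identity
\begin{equation*}
\frac{d}{ds}\Big|_{s=p} \int_\Rnm |f|^s \omega\, dy = \frac{1}{p}\, Ent_\omega(|f|^p)
\end{equation*}
together with the normalization, one gets
\begin{equation*}
p\log\|f\|_{\LW{\alpha p}} = \epsilon\, Ent_\omega(|f|^p) + O(\epsilon^2), \qquad p\log\|f\|_{\LW{\alpha(p-1)+1}} = \frac{(p-1)\epsilon}{p}\, Ent_\omega(|f|^p) + O(\epsilon^2),
\end{equation*}
while a direct computation from the definition of $\theta$ yields $\theta = \frac{(n+a)\epsilon}{p^2} + O(\epsilon^2)$.

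Dividing the log-form of \eqref{nguyenthmGNS_ineq1} by $\epsilon$ and letting $\epsilon \to 0^{+}$, the term $\epsilon^{-1}(1-\theta)\,p\log\|f\|_{\LW{\alpha(p-1)+1}}$ contributes $\frac{p-1}{p}\,Ent_\omega(|f|^p)$, which, moved to the left-hand side, combines with $\epsilon^{-1}p\log\|f\|_{\LW{\alpha p}} \to Ent_\omega(|f|^p)$ to leave $\frac{1}{p}\,Ent_\omega(|f|^p)$. The limiting inequality thus reads
\begin{equation*}
\frac{1}{p}\,Ent_\omega(|f|^p) \leq \Lambda - \frac{1}{p}\log\!\Moma{(K_C)} + \frac{n+a}{p^2}\log\!\int_\Rnm C^*(\fullgradient f)\,\omega\, dy,
\end{equation*}
where $\Lambda := \lim_{\epsilon \to 0^{+}} \epsilon^{-1}\, p\log G_{n,a}(\alpha,p)$. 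Multiplying by $p$ and rearranging reduces Theorem \ref{nguyenthmLog} to the constant identity $p\Lambda = \frac{n+a}{p}\log L_{n,a}(p)$.

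The main technical hurdle is the evaluation of $\Lambda$. The parameter $\beta = \frac{\alpha(p-1)+1}{\alpha-1}$ behaves like $p/\epsilon$, so all three factors of $G_{n,a}(\alpha,p)$ have logarithms that individually diverge as $\epsilon \to 0^{+}$: the bracketed $(\alpha-1)^p$ contributes $(p-1)\log\epsilon$, while the Stirling asymptotic $\Gamma(\beta)/\Gamma(\beta - (n+a)/q) \sim \beta^{(n+a)/q}$ produces a matching $-(p-1)\log\epsilon$ after being weighted by $p\theta/(n+a)$. I would expand each factor to leading order, observe the cancellation of the divergent logarithms between the first and third factors, and then collect the finite remainders; the middle factor $[(q\beta - n - a)/(q\beta)]^{1/(\alpha p)}$ contributes the $-(p-1)$ responsible for the $e^{-(p-1)}$ piece of $L_{n,a}(p)$, and the residual log-Gamma term rearranges into $\Gamma((n+a+q)/q)^{-p/(n+a)}$. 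Finally, the equality case follows by sending $\alpha \to 1^{+}$ in the GN extremal $h_\alpha(\lambda t, \lambda(x-x_0))$ with a suitable rescaling of $\lambda$, which produces precisely the family $b\, e^{-a C(t,\, x-x_0)}$ with $\|f\|_\LW{p}=1$ asserted in the theorem.
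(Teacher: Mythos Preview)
The paper does not prove Theorem \ref{nguyenthmLog}; together with Theorems \ref{nguyenthmSob} and \ref{nguyenthmGNS} it is quoted as a known result from Nguyen \cite{Ng} (see the opening of Section \ref{sec_lemmas}: ``The starting point of our work consists in the following three theorems proved in \cite{Na}'') and used as a black box in the proof of Theorem \ref{mainthmLog}. There is thus no ``paper's own proof'' to compare against.

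That said, your derivation is the standard Beckner/Del Pino--Dolbeault reduction and is correct. The first-order expansions of $p\log\|f\|_{\LW{\alpha p}}$ and $p\log\|f\|_{\LW{\alpha(p-1)+1}}$ are justified because $s\mapsto \int_\Rnm |f|^s\omega\,dy$ is smooth for compactly supported smooth $f$; your computation $\theta=(n+a)\epsilon/p^2+O(\epsilon^2)$ is exactly what the formula for $\theta$ gives at $\alpha=1$; and the limiting inequality you obtain matches the statement once $p\Lambda=\frac{n+a}{p}\log L_{n,a}(p)$ is verified. The Stirling cancellation you describe (the $(\alpha-1)^{p-1}$ in the first factor against $\Gamma(\beta)/\Gamma(\beta-(n+a)/q)\sim\beta^{(n+a)/q}$ in the third) is precisely the mechanism, and the finite remainders do assemble into $L_{n,a}(p)$. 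For the equality case, note that the theorem only asserts sufficiency (``equality holds if''), so a direct substitution of $f=b\,e^{-aC(t,x-x_0)}$ suffices; the limit-of-extremizers route works too but, as written, tacitly uses continuity of both sides in $\alpha$ that you have not argued.

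In short: your proof is valid and is essentially the route Nguyen takes in \cite{Ng}, but in the present paper the theorem is a citation, not a locally proved result.
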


The proofs of Theorems \ref{mainthmSob}, \ref{mainthmGNS} and \ref{mainthmLog} in the case that $p > 1$ are based on two lemmas and use Theorems \ref{nguyenthmSob}, \ref{nguyenthmGNS} and \ref{nguyenthmLog} respectively in a crucial manner, with the appropriate choice of $C$ for each $f$, denoted by $C_f$, as stated in Definition \ref{defi} below.

Before introducing $C_f$, we shall need some notations.

Throughout the remainder of paper we think of $\R^{n-1}$ as the subset  $\{0\} \times \R^{n-1}\subset \R^n$.

For each smooth function $f$ with compact support on $\R^n$, consider

\[
L_f=\{\xi\in\R^{n-1} :\ \|\partialgradient_\xi f\|_{\LW p} \leq 1\}\, ,
\]
which is a convex body in $\R^{n-1}$ defined by the norm

\[
\|\xi\|_f =\left( \int_\Rnm |\partialgradient_\xi f(y)|^p \omega(y) dy \right) ^{\frac1p} =  \|\partialgradient_\xi f\|_\LW{p}\, .
\]

For convenience, we set
\begin{equation}
\label{lem_eqn_Zdef}
\ZZ = \left( \int_\S \| \partialgradient_\xi f\|_\LW{p}^{1-n} d \xi \right) ^{\frac 1 {1 - n}}
\end{equation}
and notice we have the identities
\begin{equation}
\label{lem_eqn_VolZ}
(n-1) \vol(L_f) = Z_p(f)^{1-n}
\end{equation}
and
\begin{equation}
\label{lem_eqn_EZ}
\E_p(f) = c_{n-1,p} Z_p(f)\, .
\end{equation}

We now are ready to introduce the definition of the function $C_f$.
\begin{definition}\label{defi}
Let $1 < p < n+a$ and $(t, x) \in \R^n_+$. For each non-zero smooth function $f$ with compact support on $\R^n$, we define

\[
C_f^*(t,x) :=  \frac 1p \alpha_f |t|^p + \int_\S \|\partialgradient_\xi f\|_\LW{p}^{1-n-p} |\langle x, \xi \rangle|^p d \xi\, ,
\]
where $\alpha_f = p \left( \frac {1 + a} {n-1} \right) \ZZ^{1-n} \TT^{-p}$.

The function $C_f$ is defined as the Legendre transform of $C_f^*$. The specific choice of the constant $\alpha_f$ will be clarified in the last section.
\end{definition}

\begin{lemma}
\label{lem_Cdef}
Let $p$ and $f$ be as in Definition \ref{lem_Cdef}. The function $C_f^*$ is well defined, even, convex, $p$-homogeneous and $C_f^*(y) > 0$ for all $y \neq 0$.
Thus its Legendre transform $C_f$ satisfies the hypotheses of Theorem \ref{nguyenthmSob}, \ref{nguyenthmGNS} and \ref{nguyenthmLog}.
\end{lemma}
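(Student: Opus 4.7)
The plan is to verify each property of $C_f^*$ directly from its definition and then transfer the conclusions to $C_f = (C_f^*)^*$ by convex duality. The starting point is to check that the map $\xi \mapsto \|\partialgradient_\xi f\|_\LW{p}$ is continuous and strictly positive on $\S$. Continuity is immediate from the identity $\partialgradient_\xi f = \partialgradient f \cdot \xi$ together with dominated convergence, using the compact support of $f$. Strict positivity is the only subtle point: if $\|\partialgradient_\xi f\|_\LW{p} = 0$ for some $\xi \in \S$, then $f$ is constant along the direction $\xi$ on $\Rnm$, and since $f$ has compact support this forces $f \equiv 0$ on $\Rnm$, contradicting the non-triviality hypothesis. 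The same reasoning applied to $\ft$ gives $\TT > 0$, so $\alpha_f$ is a finite positive constant and $\ZZ$ is finite and positive. Continuity and positivity of $\|\partialgradient_\xi f\|_\LW{p}^{1-n-p}$ on the compact set $\S$ then yield convergence of the defining integral of $C_f^*$ for every $(t,x) \in \R^n$.

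The properties of being even and $p$-homogeneous are inherited termwise from $|t|^p$ and $|\langle x, \xi \rangle|^p$. Convexity follows because $t \mapsto \frac{1}{p}\alpha_f |t|^p$ is convex, the second term is a positively weighted integral of the convex functions $x \mapsto |\langle x, \xi \rangle|^p$, and the two summands depend on disjoint blocks of coordinates. For strict positivity away from the origin, both summands are non-negative: if $t \neq 0$ the first is strictly positive since $\alpha_f > 0$, while if $t = 0$ but $x \neq 0$ the set $\{\xi \in \S : \langle x, \xi \rangle = 0\}$ is a lower-dimensional subsphere of measure zero on $\S$, so the integrand in the second term is strictly positive on a set of full measure and the integral is positive.

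With these facts in hand, $C_f^*$ coincides, up to a multiplicative constant, with the $p$-th power of a norm on $\R^n$. Classical convex duality then gives that its Legendre transform $C_f$ is finite, even, convex, strictly positive outside the origin, and positively $q$-homogeneous with $q = p/(p-1) > 1$, namely a constant multiple of the $q$-th power of the dual norm. These are precisely the hypotheses placed on $C$ in Theorems \ref{nguyenthmSob}, \ref{nguyenthmGNS} and \ref{nguyenthmLog}, completing the argument. I expect the main obstacle to be the strict positivity of $\xi \mapsto \|\partialgradient_\xi f\|_\LW{p}$ on $\S$: the exponent $1-n-p$ is negative, so this positivity is what makes the defining integral of $C_f^*$ meaningful and what keeps $\alpha_f$ and $\ZZ$ inside the positive reals; once this observation is in place, the remaining verifications reduce to routine convex-analysis facts.
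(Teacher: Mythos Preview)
Your proof is correct and follows essentially the same approach as the paper's: the key step in both is the contradiction argument showing $\|\partialgradient_\xi f\|_\LW{p} > 0$ for every $\xi \in \S$ (using compact support to rule out constancy along a direction), followed by continuity on the compact sphere to guarantee integrability. You are simply more thorough than the paper, which declares evenness, $p$-homogeneity, positivity away from the origin, and the duality conclusion ``evident'' and only writes out well-definedness and convexity explicitly.
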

\begin{proof}
Let $f$ be a non-zero smooth function with compact support on $\R^n$. We only prove the good definition and strict convexity of $C_f^*$, since the other statements are evident.

First we ensure that $\|\partialgradient_\xi f\|_\LW{p} > 0$ for any $\xi \in \s^{n-2}$. Otherwise, we have $\partialgradient_\xi f(t,x) = 0$ for every $(t,x) \in \R_+^n$ which contradicts the fact that $f$ is non-zero and has compact support on $\R^n$. Then, continuity with respect to $\xi$ gives $\|\partialgradient_\xi f\|_\LW{p} \geq \eps > 0$ for every $\xi \in \s^{n-2}$ which proves the good definition of $C_f^*$.

For the convexity, take $(t_1,x_1), (t_2,x_2) \in \R_+^n$ and $0 \leq \lambda \leq 1$. We have
\begin{eqnarray*}
 C_f^*\left(\lambda (t_1,x_1) + (1-\lambda) (t_2,x_2) \right) &=& \int_{\s^{n-2}}\|\partialgradient_\xi f\|_p^{1-n-p}\left|\lambda {\langle x_1,  \xi\rangle} + (1-\lambda) {\langle x_2,  \xi\rangle} \right|^{p} d\xi\\
 && + \frac 1p \alpha_f |\lambda t_1 + (1-\lambda) t_2|^p\\
 &\leq& \int_{\s^{n-2}}\|\partialgradient_\xi f\|_p^{1-n-p}\left( \lambda |\langle x_1,  \xi\rangle|^p + (1-\lambda) |\langle x_2,  \xi\rangle|^p \right) d\xi\\
 && + \frac 1p \lambda \alpha_f |t_1|^p + \frac 1p (1-\lambda) \alpha_f |t_2|^p\\
 &=& \lambda C_f^*(t_1,x_1) + (1-\lambda) C_f^*(t_2,x_2)\, .
\end{eqnarray*}

\end{proof}

In order to simplify notation, for each $f$ as before, we denote $K_{C_f}$ by $K_f$, $(K_{C_f})_+$ by $(K_f)_+$ and $\|\cdot\|_{K_{C_f}}$ by $\|\cdot\|_{K_f}$ and set

\[
D_f^*(x) = \int_\S \|\partialgradient_\xi f\|_\LW{p}^{1-n-p} |\langle x, \xi \rangle|^p d \xi
\]
for $x \in \R^{n-1}$.

It is easy to see that
\begin{equation}
\label{lem_productLegendre}
C_f(t, x) = \frac {\alpha_f^{1-q}}{q} |t|^q + D_f(x)\, ,
\end{equation}
where $D_f$ is the Legendre transform of $D_f^*$ and $q > 1$ satisfies $\frac{1}{p} + \frac{1}{q} = 1$.

Let $K_{f,t} = \{x\in \R^{n-1}:\ (t, x) \in K_f\}$. Using \eqref{lem_productLegendre} we see that $K_{f,0} = \{x \in \R^{n-1}:\ D_f(x) \leq 1\}$ and
\begin{equation}
\label{lem_Kt_K0_eqn}
K_{f,t} = \left\{x \in \R^{n-1}:\ D_f(x) \leq 1- \frac{\alpha_f^{1-q}}{q} |t|^q \right\} = \left(1-\frac{\alpha_f^{1-q}}{q} |t|^q\right)^{\frac{1}{q}} K_{f,0}\, .
\end{equation}

The next lemma is a central tool in the sequel and states that

\begin{lemma}
\label{lem_K_L}
Let $p$ and $f$ be as in Definition \ref{lem_Cdef} and let $q > 1$ with $\frac{1}{p} + \frac{1}{q} =1$. The relation between $K_{f,0}$ and $L_f$ is given by

\[
K_{f,0} = \left((n+p-1) a_{n-1,p} \vol(L_f)\right)^{\frac{1}{p}} q^{\frac 1q}p^{\frac 1p} \Gamma_p L_f
\]
and as a consequence,

\begin{equation}
\label{lem_K_L_eqn}
\vol(K_{f,0}) = \left(p q^{\frac pq } (n+p-1) a_{n-1,p}\right)^{\frac{n-1}{p}} \vol(L_f)^{\frac{n-1}p} \vol(\Gamma_p L_f)\, ,
\end{equation}
where $a_{n,p}$ is given in Section \ref{sec_notation}.
\end{lemma}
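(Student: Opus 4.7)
\medskip

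\noindent\textbf{Proof plan for Lemma \ref{lem_K_L}.} The plan is to rewrite the spherical integral defining $D_f^*$ as an integral over $L_f$ via polar coordinates, recognize the resulting expression as the $p$-th power of the support function of $\Gamma_p L_f$, and then compute $D_f$ by Legendre duality, from which the description of $K_{f,0}$ (and its volume) follows by direct inspection.

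First I would recall that by definition $\|\xi\|_{L_f} = \|\partialgradient_\xi f\|_{\LW p}$, so that the radial function satisfies $r_{L_f}(\xi) = \|\partialgradient_\xi f\|_{\LW p}^{-1}$ for $\xi \in \S$. Then, for $x \in \R^{n-1}$, polar coordinates in $\R^{n-1}$ give
\[
\int_{L_f} |\langle x, z \rangle|^p dz = \int_{\S} \int_0^{r_{L_f}(\xi)} |\langle x, \xi\rangle|^p r^{p + n - 2} dr\, d\xi = \frac{1}{n+p-1} \int_{\S} \|\partialgradient_\xi f\|_\LW{p}^{1-n-p} |\langle x, \xi\rangle|^p d\xi\, .
\]
The right hand side is exactly $(n+p-1)^{-1} D_f^*(x)$, and recognizing the left hand side through the definition of the $\Lp$-centroid body yields
\[
D_f^*(x) = (n+p-1)\, a_{n-1,p}\, \vol(L_f)\, h_{\Gamma_p L_f}(x)^p\, .
\]
This is the core identity; obtaining it is mostly bookkeeping with homogeneity, and I do not expect any obstacle here.

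Next I would compute the Legendre transform. Setting $\mu := (n+p-1) a_{n-1,p} \vol(L_f)$ and $N(\cdot) := h_{\Gamma_p L_f}(\cdot)$, the body $\Gamma_p L_f$ is centrally symmetric, so $N$ is the norm $\|\cdot\|_{(\Gamma_p L_f)^\circ}$, and its dual norm is $N_*(\cdot) = \|\cdot\|_{\Gamma_p L_f}$. A standard scaling computation for the Legendre transform of $z \mapsto \lambda \tfrac{1}{p} N(z)^p$, obtained by substituting $z = \lambda^{-1/p} w$ inside the supremum defining $(\lambda \tfrac{1}{p} N^p)^*$, gives
\[
(\lambda \tfrac{1}{p} N^p)^*(y) = \tfrac{1}{q} \lambda^{1-q} N_*(y)^q\, ,
\]
and applying this with $\lambda = p\mu$ yields
\[
D_f(y) = \frac{(p\mu)^{1-q}}{q}\, \|y\|_{\Gamma_p L_f}^q\, .
\]

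Finally, from $K_{f,0} = \{y : D_f(y) \leq 1\}$ I would solve the inequality $\|y\|_{\Gamma_p L_f}^q \leq q (p\mu)^{q-1}$, use the relation $(q-1)/q = 1/p$, and conclude
\[
K_{f,0} = q^{1/q} p^{1/p}\, \mu^{1/p}\, \Gamma_p L_f = \left((n+p-1) a_{n-1,p} \vol(L_f) \right)^{1/p} q^{1/q} p^{1/p}\, \Gamma_p L_f\, ,
\]
which is the first identity. Taking $(n-1)$-dimensional volumes and simplifying the resulting constant $(q^{1/q} p^{1/p})^{n-1} = (p q^{p/q})^{(n-1)/p}$ gives the formula \eqref{lem_K_L_eqn}. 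The main technical point is getting the Legendre-transform scaling exactly right, in particular tracking the factor $p$ that enters when rewriting $\mu N^p$ as $(p\mu)\cdot \tfrac{1}{p} N^p$; once that is handled, the rest is substitution.
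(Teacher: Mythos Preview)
Your proposal is correct. The paper itself does not give a proof here but simply refers to Lemmas~3 and~4 of \cite{HJM}; the argument you outline (polar coordinates in $\R^{n-1}$ to rewrite $D_f^*$ in terms of $h_{\Gamma_p L_f}^p$, followed by the standard Legendre-transform computation for a scaled $p$-th power of a norm) is exactly the natural route and matches what that reference does.
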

\begin{proof}
See Lemmas $3$ and $4$ of \cite{HJM}.
\end{proof}
%
%

\section{Proof of the main theorems}
\label{sec_proof}

We first prove Theorems \ref{mainthmSob}, \ref{mainthmGNS} and \ref{mainthmLog} for $p > 1$ as corollaries of the following lemma:

\begin{lemma}
\label{main_ineq}
Let $a \geq 0$ and $p, q > 1$ be such that $\frac{1}{p} + \frac{1}{q} =1$. For any smooth function $f$ with compact support on $\R^n$, we have:

\[
\left(\Moma{K_f}\right)^{-\frac p{n+a}} \int_\Rnm C_f^*(\fullgradient f(y)) \omega(y) dy \leq \left( \mathcal R_{n,p,a}  \EE^{\frac{n-1}{n+a}} \TT^{\frac{1 + a}{n+a}}\right)^p\, ,
\]
where $\mathcal R_{n,p,a}$ is computed in the appendix A and is
\begin{align*}
\mathcal R_{n,p,a} &= q^{-\frac1q} \pi ^{-\frac{n-1}{2 (n+a)}} (1+a)^{-\frac{1+a}{p (n+a)}} (n-1)^{-\frac{n-1}{p (n+a)}} \left(\frac{n+a}{p}\right)^{\frac1p} \left(\frac{q \Gamma \left(\frac{n+1}{2}\right) \Gamma \left(\frac{n+a+q}{q}\right)}{\Gamma \left(\frac{1+a}{q}\right) \Gamma \left(\frac{n-1+q}{q}\right)}\right)^{\frac{1}{n+a}}\, .
\end{align*}
\end{lemma}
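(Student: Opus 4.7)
\smallskip

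The plan is to compute the two sides of the inequality semi-explicitly and then combine them with the $L_p$ Busemann--Petty centroid inequality. The key feature is that the choice of $\alpha_f$ in Definition \ref{defi} is designed precisely so that the integral of $C_f^*(\fullgradient f)$ splits into two equal-weight pieces that each evaluate to a simple multiple of $\ZZ^{1-n}$.

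\medskip

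\textbf{Step 1 (evaluate $\int C_f^*(\fullgradient f)\omega$).} Plugging $\fullgradient f = (\ft,\partialgradient f)$ into $C_f^*$ and integrating termwise gives
\[
\int_{\Rnm} C_f^*(\fullgradient f)\omega \, dy = \frac{\alpha_f}{p}\TT^{p} + \int_{\S} \|\partialgradient_\xi f\|_\LW{p}^{1-n-p}\|\partialgradient_\xi f\|_\LW{p}^{p}\, d\xi = \frac{\alpha_f}{p}\TT^{p} + \ZZ^{1-n}.
\]
By the definition of $\alpha_f$, the first summand equals $\frac{1+a}{n-1}\ZZ^{1-n}$, so the whole integral collapses to $\frac{n+a}{n-1}\ZZ^{1-n}$.

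\medskip

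\textbf{Step 2 (evaluate $\Moma{K_f}$).} From \eqref{lem_productLegendre} the body $K_f$ is described by $\frac{\alpha_f^{1-q}}{q}|t|^{q} + D_f(x) \leq 1$, and \eqref{lem_Kt_K0_eqn} identifies the horizontal slice $K_{f,t}$ as a dilation of $K_{f,0}$ by $(1-\frac{\alpha_f^{1-q}}{q}|t|^{q})^{1/q}$. Integrating layer by layer in $t$ and rescaling $t = q^{1/q}\alpha_f^{1/p}u$ yields
\[
\Moma{K_f} = q^{(a+1-q)/q}\,\alpha_f^{(a+1)/p}\,\vol(K_{f,0})\,\frac{\Gamma(\frac{a+1}{q})\Gamma(\frac{n+q-1}{q})}{\Gamma(\frac{n+a+q}{q})},
\]
after recognizing a Beta integral.

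\medskip

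\textbf{Step 3 (bring in Busemann--Petty).} Lemma \ref{lem_K_L} identifies $\vol(K_{f,0})$ with a constant times $\vol(L_f)^{(n-1)/p}\vol(\Gamma_p L_f)$; applying the $L_p$ Busemann--Petty centroid inequality \eqref{not_BPineq} to $L_f\subset\R^{n-1}$ gives $\vol(\Gamma_p L_f)\geq \vol(L_f)$, hence
\[
\vol(K_{f,0}) \geq \bigl(pq^{p/q}(n+p-1) a_{n-1,p}\bigr)^{(n-1)/p}\vol(L_f)^{(n+p-1)/p}.
\]
The identity \eqref{lem_eqn_VolZ} then replaces $\vol(L_f)$ by $\frac{1}{n-1}\ZZ^{1-n}$.

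\medskip

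\textbf{Step 4 (plug in $\alpha_f$ and match exponents).} Substituting $\alpha_f = p\frac{1+a}{n-1}\ZZ^{1-n}\TT^{-p}$ into the expression from Step 2 produces a lower bound of the form
\[
\Moma{K_f}\geq C \cdot \ZZ^{(1-n)(n+a+p)/p}\TT^{-(1+a)},
\]
where $C$ is an explicit constant collecting the Gamma values, the $a_{n-1,p}$, and the factors from $\alpha_f$. Raising to the power $-p/(n+a)$ flips the sign, and multiplying by $\frac{n+a}{n-1}\ZZ^{1-n}$ from Step 1 gives an upper bound on the LHS of the lemma proportional to
\[
\ZZ^{(n-1)p/(n+a)}\TT^{(1+a)p/(n+a)},
\]
once one checks that the exponent of $\ZZ$ collapses to $(n-1)p/(n+a)$. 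Finally \eqref{lem_eqn_EZ} rewrites $\ZZ = \EE/c_{n-1,p}$, producing the quantity $\EE^{(n-1)/(n+a)}\TT^{(1+a)/(n+a)}$ to the $p$-th power, as claimed.

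\medskip

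The mechanics of Steps 1--3 are essentially algebraic once Lemma \ref{lem_K_L} and the structure of $C_f^*$ are in hand; the real geometric input is the single application of the $L_p$ Busemann--Petty centroid inequality in Step 3. The main technical obstacle is the bookkeeping in Step 4: keeping track of all the Gamma factors, the powers of $q$ and of $n-1$, and checking that they coalesce into the precise constant $\mathcal R_{n,p,a}$ displayed in the statement; this is a routine but lengthy calculation which is naturally deferred to Appendix A.
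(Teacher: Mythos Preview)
Your proof is correct and follows essentially the same route as the paper's own argument: compute $\int C_f^*(\nabla f)\,\omega = \frac{n+a}{n-1}\ZZ^{1-n}$, slice $(K_f)_+$ via \eqref{lem_Kt_K0_eqn} to reduce $\Moma{K_f}$ to a Beta integral times $\vol(K_{f,0})$, bound $\vol(K_{f,0})$ from below using Lemma~\ref{lem_K_L} together with the $L_p$ Busemann--Petty centroid inequality, and then assemble the constants. The only difference is presentational---you evaluate the Beta integral explicitly in Step~2 whereas the paper leaves it as $\int_0^1 s^a(1-s^q)^{(n-1)/q}\,ds$ until the very end---but the logic is identical.
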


\begin{proof}
First we compute
\begin{align*}
\int_\Rnm D_f^*(\partialgradient f(y)) \omega(y) dy
&= \int_\Rnm \int_\S \|\partialgradient_\xi f\|_\LW{p}^{1-n-p} |\partialgradient_\xi f(y)|^p d \xi \omega(y) dy\\
&=  \int_\S \|\partialgradient_\xi f\|_\LW{p}^{1-n-p} \int_\Rnm |\partialgradient_\xi f(y)|^p  \omega(y) d y d \xi \\
&= \int_\S \|\partialgradient_\xi f\|_\LW{p}^{1-n} d \xi = \ZZ^{1-n}\, ,\\
\int_\Rnm C^*(\fullgradient f(y)) \omega(y) dy
&= \frac {\alpha_f}{p} \TT^p + \ZZ^{1-n} \\
&= \frac {n+a} {n-1} \ZZ^{1-n}\, .
\end{align*}

By the formula \eqref{lem_Kt_K0_eqn}, we have
\begin{align*}
\int_{(K_{f})_+} y_n^a dy
&= \int_0^1 t^a \vol(K_{f,t})dt\\
&= \int_0^1 t^a \left(1-\frac{\alpha_f^{1-q}}{q} |t|^q\right)^\frac{n-1}{q} \vol(K_{f,0})dt\\
&= q^{\frac{1+a}{q}} \alpha_f^{\frac{1+a}{p}} \vol(K_{f,0}) \int_0^1 s^a \left(1-s^q\right)^{\frac{n-1}{q}} ds\, .
\end{align*}
Then, by Lemma \ref{lem_K_L} and the $\Lp$ Busemann-Petty centroid inequality \eqref{not_BPineq},
\begin{align*}
 \left(\Moma{K_f}\right)^{-\frac p{n+a}}
&=  \left( q^{\frac{1+a}{q}} \alpha_f^{\frac{1+a}{p}} \vol(K_{f,0}) \int_0^1 s^a \left(1-s^q\right)^{\frac{n-1}{q}} ds \right)^{-\frac p {n+a}}\\
&\leq \left(p (n+p-1) a_{n-1,p}\right)^{-\frac{n-1}{n+a}} \alpha _f^{-\frac{1+a}{n+a}} q^{\frac{p- p n- pa+q}{q n+a}} \\\times& \vol(L_f)^{-\frac{n+p-1}{n+a}} \left(\frac{\Gamma \left(\frac{1+a}{q}\right) \Gamma \left(\frac{n+q-1}{q}\right)}{\Gamma \left(\frac{n+a+q}{q}\right)}\right)^{-\frac{p}{n+a}}\, ,
\end{align*}
which yields
\begin{equation}
\label{lastformula_lemma}
\left(\Moma{K_f}\right)^{-\frac p{n+a}} \int_\Rnm C_f^*(\fullgradient f(y)) \omega(y) dy
 = \EE^{\frac{(n-1) p}{n+a}} \TT^{\frac{(1+a) p}{n+a}}
\end{equation}
\[
\times p^{-1} (n+a) {(1+a)^{-\frac{1+a}{n+a}} (n+p-1)^{-\frac{n-1}{n+a}} q^{\frac{p}{n+a}-p+1} a_{n-1,p}^{-\frac{n-1}{n+a}} \left(\frac{(n-1) c_{n-1,p}^{1-n}\; \Gamma \left(\frac{n+a+q}{q}\right)}{\Gamma \left(\frac{1+a}{q}\right) \Gamma \left(\frac{n+q-1}{q}\right)}\right)^{\frac{p}{n+a}}}
\]
\[
= \left( \mathcal R_{n,p,a}  \EE^{\frac{n-1}{n+a}} \TT^{\frac{1+a}{n+a}}\right)^p\, .
\]

\end{proof}

\begin{center}
{\bf Proof of Theorem \ref{mainthmSob} and Corollary \ref{conseqaffSoblev}}
\end{center}

First assume $p > 1$. From Theorem \ref{nguyenthmSob} with $C = C_f$ and Lemma \ref{main_ineq}, we deduce that
\begin{align}
\|f\|_\LW{p^*} 	&\leq S(n,a,p) \mathcal R_{n,p,a}  \EE^{\frac{n-1}{n+a}} \TT^{\frac{1+a}{n+a}} \nonumber \\
		&= \mathcal S_{n,p,a}  \EE^{\frac{n-1}{n+a}} \TT^{\frac{1+a}{n+a}}\, , \label{lastformulasob}
\end{align}
where $\mathcal S_{n,p,a}$ is computed in the appendix A, and so \eqref{mainthmSob_ineq} follows.

Using Young's inequality in the right-hand side above, we get
\begin{align}
\|f\|_\LW{p^*} &\leq \mathcal S_{n,p,a} \left((1+a)^{\frac{1+a}{n+a}} (n-1)^{\frac{n-1}{n+a}}\frac{\EE^p+\TT^p}{n+a}\right)^{\frac 1p} \label{ineqSob} \\
&= \mathcal K_{n,p,a} \left( \EE^p + \TT^p \right)^{\frac 1p}\, , \nonumber
\end{align}
where $\mathcal K_{n,p,a} = \mathcal S_{n,p,a} (1+a)^{\frac{1+a}{p (n+a)}} (n-1)^{\frac{n-1}{p (n+a)}} (n+a)^{-\frac{1}{p}}$.

We prove now that \eqref{mainthmSob_ineq} is sharp.

Consider Theorem \ref{nguyenthmSob} with the function $C(t, x) = \frac{|t|^q}{q} + \frac {|x|^q}{q}$ and its Legendre transform $C^*(t, x) = \frac{|t|^p}{p} + \frac {|x|^p}{p}$. In this case, one obtains a classical inequality whose constant computed by Nguyen \cite{Ng} is exactly $\mathcal K_{n,p,a}$. Note that plugging (\ref{stronger}) in (\ref{ineqSob}), we derive the same sharp inequality, that is

\begin{equation}
\label{prf_nguyen_euclidean_ineq}
\|f\|_\LW{p^*} \leq \mathcal K_{n,p,a} \left( \GG^p + \TT^p \right)^{\frac{1}{p}}\, .
\end{equation}

Using the invariance of \eqref{lastformulasob} with respect to \eqref{mainthmSob_invariantmatrix} and that the extremal functions of \eqref{prf_nguyen_euclidean_ineq} are known of \cite{Ng}, the following are extremal functions of \eqref{lastformulasob}:

\[
f(t,x) = c h_{p,a}( \lambda t, B (x-x_0))\, ,
\]
where

\[
h_{p,a}(t, x) = \left(1 + |t|^p + |x|^p \right)^{-\frac {n + a -p} p }\, .
\]

Let $f$ be an extremal function of \eqref{lastformulasob}. Then the equality case of the $\Lp$ Busemann-Petty centroid inequality implies that $K_{f,0}$ must be an ellipsoid, so that by the definition of $K_{f,0}$, we have $D_f(x) = |B x|^q$ for some invertible matrix $B$. Since we also have equality in \eqref{nguyenthmSob_ineq} with $C = C_f$, we conclude that $f$ must have the form given in Theorem \ref{nguyenthmSob} with $C(t,x) = a |t|^q + |B x|^q$. So, we characterize the extremal functions of \eqref{mainthmSob_ineq} for any $p > 1$. As $p$ approaches $1$, the sharp constant $\mathcal S_{n,p,a}$ tends to a constant $\mathcal S_{n,1,a}$. The sharpness of the latter follows by noting that the inequality \eqref{lastformulasob} becomes equality for $f(x) = {\bf 1}_\B( \lambda t, B(x-x_0))$.

Finally, for the proof of Corollary \ref{conseqaffSoblev}, it suffices to get $\lambda$ so that the terms $\EE$ and $\TT$ are equal for the above extremals. It is proved in the appendix B that the equality holds if, and only if, $\lambda = \det(B)^{\frac 1{n-1}}$. This concludes the section.\ \rule {1.5mm}{1.5mm}\\

\begin{center}
{\bf Proof of Theorem \ref{mainthmGNS}}
\end{center}

Let $p > 1$. We analyze two cases separately.\\

\emph{The case $\alpha > 1$:}

From inequality \eqref{nguyenthmGNS_ineq1}, Theorem \ref{nguyenthmGNS} with $C = C_f$ yields

\[
\|f\|_{\LW{\alpha p}} \leq G_{n,a}(\alpha, p) \left( \Moma{K_f} \right)^{-\frac \theta{n+a}} \left( \int_\Rnm C_f^*(\partialgradient f(y)) \omega(y) dx) \right)^{\frac\theta p} \|f\|^{1-\theta}_{\LW{\alpha(p-1)+1} }\, .
\]

In the same spirit of the previous proof, thanks to Lemma \ref{main_ineq}, we derive

\begin{align}
\|f\|_{\LW{\alpha p}} &\leq \left( G_{n,a}(\alpha, p)^\frac{1}{\theta} \mathcal R_{n,p,a}  \EE^{\frac{n-1}{n + a}} \TT^{\frac{1 + a}{n + a}} \right)^{\theta} \|f\|^{1-\theta}_{\LW{\alpha(p-1)+1} } \nonumber \\
		&= \left( \mathcal G_{n,p,a,\alpha} \EE^{\frac{n-1}{n + a}} \TT^{\frac{1 + a}{n + a}} \right)^{\theta} \|f\|^{1-\theta}_{\LW{\alpha(p-1)+1} }\, , \label{lastformulagn}
\end{align}
where $\mathcal G_{n,p,a,\alpha} $ is computed in the appendix A. Moreover, using the extremals of the classical counterpart, by straightforward computations, it follows that the functions stated in Theorem \ref{mainthmGNS} are extremals of (\ref{lastformulagn}) for any $p > 1$. Thus, the sharp inequality \eqref{mainthmGNS_ineq1} is proved. \\

\emph{The case $\alpha < 1$:}

Similarly, from inequality \eqref{nguyenthmGNS_ineq2}, Theorem \ref{nguyenthmGNS} with $C = C_f$ gives

\[
\|f\|_{\LW{\alpha(p-1)+1} } \leq N_{n,a}(\alpha, p) \left( \Moma{K_f} \right)^{-\frac \theta{n_a}} \left( \int_\Rnm C_f^*(\partialgradient f(y)) \omega(y) dx) \right)^{\frac\theta p} \|f\|^{1-\theta}_{\LW{\alpha p} }\, .
\]

Using Lemma \ref{main_ineq}, we get

\begin{align}
\|f\|_{\LW{\alpha(p-1)+1}} &\leq \left(  N_{n,a}(\alpha, p)^\frac{1}{\theta} \mathcal R_{n,p,a}  \EE^{\frac{n-1}{a+n}} \TT^{\frac{1+ a}{n + a}} \right)^{\theta} \|f\|^{1-\theta}_{\LW{\alpha p} } \nonumber \\
		&= \left( \mathcal N_{n,p,a,\alpha} \EE^{\frac{n-1}{n+a}} \TT^{\frac{1+a}{n+a}} \right)^{\theta} \|f\|^{1-\theta}_{\LW{\alpha p}}\, , \label{lastformulagn1}
\end{align}
where the computation of $\mathcal N_{n,p,a,\alpha}$ and its achievement by extremal are also done in the appendix A.\\

Lastly, letting $p \rightarrow 1^+$, one easily deduces that $\mathcal G_{n,p,a,\alpha} $ and $\mathcal N_{n,p,a,\alpha}$ tends to $\mathcal G_{n,1,a,\alpha} = \mathcal S_{n,1,a} = \mathcal N_{n,1,a,\alpha}$ and one easily sees that $f(x) = {\bf 1}_\B( \lambda t, B(x-x_0))$ is extremal in both limit cases.\ \rule {1.5mm}{1.5mm} \\

\begin{center}
{\bf Proof of Theorem \ref{mainthmLog}}
\end{center}

First assume $p > 1$. By Theorem \ref{nguyenthmLog} with $C=C_f$, we have

\[
\int_\Rnm |f(y)|^p \log(|f(y)|^p) \omega(y) dy \leq \frac{n+a}p \log\left[ L_{n,a}(p) \left( \Moma{K_C} \right)^{-\frac p{n+a}} \int_\Rnm C_f^*(\partialgradient f(y)) \omega(y) dx \right]\, .
\]

So, Lemma \ref{main_ineq} produces

\begin{align}
\int_\Rnm |f(y)|^p \log(|f(y)|^p) \omega(y) dy &\leq \frac{n+a}p \log\left[ L_{n,a}(p)  \left( \mathcal R_{n,p,a}  \EE^{\frac{n-1}{n + a}} \TT^{\frac{1 + a}{n+a}}\right)^p \right] \nonumber \\
		&= \frac{n + a}p \log\left[ \mathcal L_{n,p,a}  \left( \EE^{\frac{n-1}{n+a}} \TT^{\frac{1+a}{n+a}}\right)^p \right]\, . \label{lastformulalog}
\end{align}
The computation of $\mathcal L_{n,p,a}$ is done in the appendix A. The proof that the functions given in the statement are extremal follows ideas previously applied.

Taking the limit $p \rightarrow 1^+$, we have that $\mathcal L_{n,p,a}$ converges also to $\mathcal S_{n,1,a}$ and is direct to verify that functions of the form $f(t,x) = c {\bf 1}_\B( \lambda t, B(x-x_0))$ with $||f||_\LW{1} = 1$ are extremals of (\ref{lastformulalog}) for $p=1$.\ \rule {1.5mm}{1.5mm}

\section{Appendix A}

Here we compute the constants in Theorems \ref{mainthmSob}, \ref{mainthmGNS} and \ref{mainthmLog}.

Let $p > 1$ and $q > 1$ be such that $\frac1p + \frac1q = 1$. We start by computing the constant $\mathcal R_{n,p,a}$ in Lemma \ref{main_ineq}. By formula \eqref{lastformula_lemma}, we have
\begin{align*}
\mathcal R_{n,p,a}
&= (1 + a)^{-\frac{1 + a}{p (n + a)}}  q^{-\frac1q} \left(\frac{(n-1) q \Gamma \left(\frac{n+a+q}{q}\right)}{\Gamma \left(\frac{1+a}{q}\right) \Gamma \left(\frac{n-1+q}{q}\right)}\right)^{\frac{1}{n+a}}\times  \left(\frac{p}{n+a}\right)^{-\frac1p} \left((n+p-1) a_{n-1,p}\right)^{-\frac{n-1}{p (n+a)}} c_{n-1,p}^{-\frac{n-1}{n+a}}\\
&= (1+a)^{-\frac{1+a}{p (n+a)}} (n-1)^{-\frac{n-1}{p (n+a)}} \pi ^{-\frac{n-1}{2 (n+a)}} q^{-\frac1q} \left(\frac{q \Gamma \left(\frac{n+1}{2}\right) \Gamma \left(\frac{n+a+q}{q}\right)}{\Gamma \left(\frac{1+a}{q}\right) \Gamma \left(\frac{n-1+q}{q}\right)}\right)^{\frac{1}{n+a}} \left(\frac{p}{n+a}\right)^{-\frac1p}\\
&= \pi ^{-\frac{n-1}{2 (n+a)}} (1+a)^{-\frac{1+a}{p (n+a)}} (n-1)^{-\frac{n-1}{p (n+a)}} \left(\frac{n+a}{p}\right)^{\frac1p} q^{\frac{1}{n+a}+\frac{1}{p}-1} \left(\frac{\Gamma \left(\frac{1+a}{q}\right) \Gamma \left(\frac{n-1+q}{q}\right)}{\Gamma \left(\frac{n+1}{2}\right) \Gamma \left(\frac{n+a+q}{q}\right)}\right)^{-\frac{1}{n+a}}\\
&= q^{-\frac1q} \pi ^{-\frac{n-1}{2 (n+a)}} (1+a)^{-\frac{1+a}{p (n+a)}} (n-1)^{-\frac{n-1}{p (n+a)}} \left(\frac{n+a}{p}\right)^{\frac1p} \left(\frac{q \Gamma \left(\frac{n+1}{2}\right) \Gamma \left(\frac{n+a+q}{q}\right)}{\Gamma \left(\frac{1+a}{q}\right) \Gamma \left(\frac{n-1+q}{q}\right)}\right)^{\frac{1}{n+a}}\, .
\end{align*}

For the constant in Theorem \ref{mainthmSob}, by formula \eqref{lastformulasob}, we have
\begin{align*}
\mathcal S_{n,p,a}
&= S(n,a,p) \mathcal R_{n,p,a}\\
&= \pi ^{-\frac{n-1}{2 (n+a)}} (1+a)^{-\frac{1+a}{p (n+a)}} (n-1)^{-\frac{n-1}{p (n+a)}} \left(\frac{p-1}{n+a-p}\right)^{\frac1q} \\
&\times \left(\frac{\Gamma \left(\frac{1+a}{q}\right) \Gamma \left(\frac{n-1+q}{q}\right)}{q \Gamma \left(\frac{n+1}{2}\right) \Gamma \left(\frac{n+a+q}{q}\right)}\right)^{-\frac{1}{n+a}} \left(\frac{\Gamma \left(n+a\right)}{\Gamma \left(\frac{n+a}{p}\right) \Gamma \left(\frac{n+a+q}{q}\right)}\right)^{\frac{1}{n+a}}\\
&= \pi ^{-\frac{n-1}{2 (n+a)}} (1+a)^{-\frac{1+a}{p (n+a)}} (n-1)^{-\frac{n-1}{p (n+a)}} \left(\frac{n+a-p}{p-1}\right)^{-\frac1q} \left(\frac{q \Gamma \left(\frac{n+1}{2}\right) \Gamma (n+a)}{\Gamma \left(\frac{1+a}{q}\right) \Gamma \left(\frac{n-1+q}{q}\right) \Gamma \left(\frac{n+a}{p}\right)}\right)^{\frac{1}{n+a}}\, .
\end{align*}

The constants $\mathcal G_{n,p,a,\alpha}, \mathcal N_{n,p,a,\alpha}$ are obtained similarly:

\begin{align*}
\mathcal G_{n,p,a,\alpha}
&= G_{n,a}(\alpha, p)^{\frac 1 \theta} \mathcal R_{n,p,a}\\
&= q^{-\frac1q} \pi ^{-\frac{n-1}{2 (n+a)}} (1+a)^{-\frac{1+a}{p (n+a)}} (n-1)^{-\frac{n-1}{p (n+a)}} \left(\frac{n+a}{p}\right)^{\frac1p} \left(\frac{q \Gamma \left(\frac{n+1}{2}\right) \Gamma \left(\frac{n+a+q}{q}\right)}{\Gamma \left(\frac{1+a}{q}\right) \Gamma \left(\frac{n+q-1}{q}\right)}\right)^{\frac{1}{n+a}}\\
&\times \left(\left(\frac{(\alpha -1) \left(-n-a+\frac{q (\alpha  (p-1)+1)}{\alpha -1}\right)}{q (\alpha  (p-1)+1)}\right)^{\frac{1}{\alpha  p}} \left(\frac{(\alpha -1)^{p-1} (\alpha  (p-1)+1) q^{1-p}}{n+a}\right)^{\frac{\theta}{p}} \right.\\
&\times \left. \left(\frac{\Gamma \left(\frac{(p-1) \alpha +1}{\alpha -1}\right)}{\Gamma \left(\frac{n+a}{q}+1\right) \Gamma \left(\frac{(p-1) \alpha +1}{\alpha -1}-\frac{n+a}{q}\right)}\right)^{\frac{\theta }{n+a}}\right)^{\frac{1}{\theta}}\\
&= q^{-\frac1q} \pi ^{-\frac{n-1}{2 (n+a)}} (1+a)^{-\frac{1+a}{p (n+a)}} (n-1)^{-\frac{n-1}{p (n+a)}} \left(\frac{n+a}{p}\right)^{\frac1p} \left(\frac{(\alpha -1)^{p-1} (\alpha  (p-1)+1) q^{1-p}}{n+a}\right)^{\frac1p}\\
&\times \left(\frac{(\alpha -1) \left(-n-a+\frac{q (\alpha  (p-1)+1)}{\alpha -1}\right)}{q (\alpha  (p-1)+1)}\right)^{\frac{1}{\alpha  \theta  p}} \left(\frac{q \Gamma \left(\frac{n+1}{2}\right) \Gamma \left(\frac{(p-1) \alpha +1}{\alpha -1}\right)}{\Gamma \left(\frac{1+a}{q}\right) \Gamma \left(\frac{n-1+q}{q}\right) \Gamma \left(\frac{(p-1) \alpha +1}{\alpha -1}-\frac{n+a}{q}\right)}\right)^{\frac{1}{n+a}}\\
&= q^{-\frac1q} \pi ^{-\frac{n-1}{2 (n+a)}} (1+a)^{-\frac{1+a}{p (n+a)}} (n-1)^{-\frac{n-1}{p (n+a)}} \left(\frac{(\alpha -1)^{p-1} (\alpha  (p-1)+1) q^{1-p}}{p}\right)^{\frac1p} \\
&\times \left(\frac{(\alpha -1) \left(-n-a+\frac{q (\alpha  (p-1)+1)}{\alpha -1}\right)}{q (\alpha  (p-1)+1)}\right)^{\frac{1}{\alpha  \theta  p}} \left(\frac{q \Gamma \left(\frac{n+1}{2}\right) \Gamma \left(\frac{(p-1) \alpha +1}{\alpha -1}\right)}{\Gamma \left(\frac{1+a}{q}\right) \Gamma \left(\frac{n-1+q}{q}\right) \Gamma \left(\frac{(p-1) \alpha +1}{\alpha -1}-\frac{n+a}{q}\right)}\right)^{\frac{1}{n+a}}\\
&= p^{-\frac1p} q^{-\frac{1}{q}-1} \pi ^{-\frac{n-1}{2 (n+a)}} (\alpha -1)^{\frac1q} (1+a)^{-\frac{1+a}{p (n+a)}} (n-1)^{-\frac{n-1}{p (n+a)}} (\alpha  p+q)^{1/p} \\
&\times \left(\frac{-\alpha  (n+a) + n+a+\alpha  p+q}{\alpha  p+q}\right)^{\frac{1}{\alpha  \theta  p}} \left(\frac{q \Gamma \left(\frac{n+1}{2}\right) \Gamma \left(\frac{p \alpha }{\alpha -1}-1\right)}{\Gamma \left(\frac{1+a}{q}\right) \Gamma \left(\frac{n}{q}+\frac{1}{p}\right) \Gamma \left(\frac{p \alpha }{\alpha -1}-\frac{n+a}{q}-1\right)}\right)^{\frac{1}{n+a}}
\end{align*}
and

\begin{align*}
\mathcal N_{n,p,a,\alpha}
&= N_{n,a}(\alpha, p)^{\frac 1 \theta} \mathcal R_{n,p,a}\\
&= q^{-\frac1q} \pi ^{-\frac{n-1}{2 (n+a)}} (1+a)^{-\frac{1+a}{p (n+a)}} (n-1)^{-\frac{n-1}{p (n+a)}} \left(\frac{n+a}{p}\right)^{\frac1p} \left(\frac{q \Gamma \left(\frac{n+1}{2}\right) \Gamma \left(\frac{n+a+q}{q}\right)}{\Gamma \left(\frac{1+a}{q}\right) \Gamma \left(\frac{n-1+q}{q}\right)}\right)^{\frac{1}{n+a}}\\
&\times \left(\left(\frac{(1-\alpha )^{p-1} (-\alpha +\alpha  p+1) q^{1-p}}{n+a}\right)^{\frac{\theta}{p}} \left(\frac{q (-\alpha +\alpha  p+1)}{(1-\alpha ) \left(n+a+\frac{q (-\alpha +\alpha  p+1)}{1-\alpha }\right)}\right)^{\frac{1-\theta }{\alpha  p}} \right. \\
&\times \left. \left(\frac{\Gamma \left(\frac{n+a}{q}+\frac{p \alpha -\alpha +1}{1-\alpha }+1\right)}{\Gamma \left(\frac{p \alpha -\alpha +1}{1-\alpha }+1\right) \Gamma \left(\frac{n+a}{q}+1\right)}\right)^{\frac{\theta }{n+a}}\right)^{\frac{1}{\theta} }\\
&= q^{-\frac1q} \pi ^{-\frac{n-1}{2 (n+a)}} (1+a)^{-\frac{1+a}{p (n+a)}} (n-1)^{-\frac{n-1}{p (n+a)}} \left(\frac{n+a}{p}\right)^{\frac1p} \left(\frac{(1-\alpha )^{p-1} (-\alpha +\alpha  p+1) q^{1-p}}{n+a}\right)^{\frac1p}\\
&\times \left(\frac{q \Gamma \left(\frac{n+1}{2}\right) \Gamma \left(\frac{n+a+q}{q}\right)}{\Gamma \left(\frac{1+a}{q}\right) \Gamma \left(\frac{n-1+q}{q}\right)}\right)^{\frac{1}{n+a}} \left(\frac{q (-\alpha +\alpha  p+1)}{(1-\alpha ) \left(n+a+\frac{q (-\alpha +\alpha  p+1)}{1-\alpha }\right)}\right)^{\frac{1-\theta }{\alpha  \theta  p}} \left(\frac{\Gamma \left(\frac{n+a}{q}+\frac{p \alpha -\alpha +1}{1-\alpha }+1\right)}{\Gamma \left(\frac{p \alpha -\alpha +1}{1-\alpha }+1\right) \Gamma \left(\frac{n+a}{q}+1\right)}\right)^{\frac{1}{n+a}}\\
&= q^{-\frac1q} \pi ^{-\frac{n-1}{2 (n+a)}} (1+a)^{-\frac{1+a}{p (n+a)}} (n-1)^{-\frac{n-1}{p (n+a)}} \left(\frac{(1-\alpha )^{p-1} (-\alpha +\alpha  p+1) q^{1-p}}{p}\right)^{\frac1p} \\
&\times \left(\frac{q (-\alpha +\alpha  p+1)}{(1-\alpha ) \left(n+a+\frac{q (-\alpha +\alpha  p+1)}{1-\alpha }\right)}\right)^{\frac{1-\theta }{\alpha  \theta  p}} \left(\frac{q \Gamma \left(\frac{n+1}{2}\right) \Gamma \left(\frac{n+a}{q}+\frac{p \alpha -\alpha +1}{1-\alpha }+1\right)}{\Gamma \left(\frac{1+a}{q}\right) \Gamma \left(\frac{n-1+q}{q}\right) \Gamma \left(\frac{p \alpha -\alpha +1}{1-\alpha }+1\right)}\right)^{\frac{1}{n+a}}\\
&= p^{-\frac1p} q^{-\frac1q} \pi ^{-\frac{n-1}{2 (n+a)}} (1+a)^{-\frac{1+a}{p (n+a)}} (n-1)^{-\frac{n-1}{p (n+a)}} (1-\alpha  (1-p))^{\frac{1-\theta }{\alpha  \theta  p}+\frac{1}{p}}\\
&\times \left(\frac{1-\alpha }{q}\right)^{\frac{\theta -1}{\alpha  \theta  p}+\frac{1}{q}} \left(n+a+\frac{\alpha  p+q}{1-\alpha }\right)^{\frac{\theta -1}{\alpha  \theta  p}} \left(\frac{\Gamma \left(\frac{1+a}{q}\right) \Gamma \left(2-\frac{p \alpha }{\alpha -1}\right) \Gamma \left(\frac{n}{q}+\frac{1}{p}\right)}{q \Gamma \left(\frac{n+1}{2}\right) \Gamma \left(-\frac{p \alpha }{\alpha -1}+\frac{n+a}{q}+2\right)}\right)^{-\frac{1}{n+a}}\, .
\end{align*}

Finally, we compute $\mathcal L_{n,p,a}$.
\begin{align*}
\mathcal L_{n,p,a}
&= L_{n,a}(p) \mathcal R_{n,p,a}^p\\
&= \frac 1{n+a} e^{1-p} (p-1)^{p-1} p \Gamma \left(\frac{n+a}{q}+1\right)^{-\frac{p}{n+a}} \\
&\times \left(q^{-\frac1q} \pi ^{-\frac{n-1}{2 (n+a)}} (1+a)^{-\frac{1+a}{p (n+a)}} (n-1)^{-\frac{n-1}{p (n+a)}} \left(\frac{n+a}{p}\right)^{\frac1p} \left(\frac{q \Gamma \left(\frac{n+1}{2}\right) \Gamma \left(\frac{n+a+q}{q}\right)}{\Gamma \left(\frac{1+a}{q}\right) \Gamma \left(\frac{n-1+q}{q}\right)}\right)^{\frac{1}{n+a}}\right)^p\\
&= e^{1-p} (p-1)^{p-1} (1+a)^{-\frac{1+a}{n+a}} (n-1)^{-\frac{n-1}{n+a}} q^{-\frac{p}{q}} \pi ^{-\frac{(n-1) p}{2 (n+a)}} \Gamma \left(\frac{n+a}{q}+1\right)^{-\frac{p}{n+a}} \left(\frac{q \Gamma \left(\frac{n+1}{2}\right) \Gamma \left(\frac{n+a+q}{q}\right)}{\Gamma \left(\frac{1+a}{q}\right) \Gamma \left(\frac{n-1+q}{q}\right)}\right)^{\frac{p}{n+a}}\\
&= e^{1-p} (p-1)^{p-1} (1+a)^{-\frac{1+a}{n+a}} (n-1)^{-\frac{n-1}{n+a}} q^{-\frac{p}{q}} \pi ^{-\frac{(n-1) p}{2 (n+a)}} \left(\frac{q \Gamma \left(\frac{n+1}{2}\right)}{\Gamma \left(\frac{1+a}{q}\right) \Gamma \left(\frac{n-1+q}{q}\right)}\right)^{\frac{p}{n+a}}\\
&= e^{1-p} p^{p-1} (1+a)^{-\frac{1+a}{n+a}} (n-1)^{-\frac{n-1}{n+a}} q^{2-2 p} \pi ^{-\frac{(n-1) p}{2 (n+a)}} \left(\frac{q \Gamma \left(\frac{n+1}{2}\right)}{\Gamma \left(\frac{1+a}{q}\right) \Gamma \left(\frac{n-1+q}{q}\right)}\right)^{\frac{p}{n+a}}\, . \ \rule {1.5mm}{1.5mm}\\
\end{align*}

\section{Appendix B}

In this appendix we summarize how the involved quantities in the proofs of Theorems \ref{mainthmSob}, \ref{mainthmGNS} and \ref{mainthmLog} vary under a linear change of coordinates.

Let $f$ be as in Theorem \ref{mainthmSob} and denote $f_A(x) = f(Ax)$, where $A$ is an invertible matrix of the form \eqref{mainthmSob_invariantmatrix}.

The equality
\[
{\mathcal E_p(f_A, \omega)} = \lambda^{- \frac ap} \det(A)^{- \frac 1p} \det(B)^{\frac 1{n-1}} \EE \\
\]
follows as in \cite{L-Y-Z-1} and is rather standard.

The following computations are trivial
\begin{align*}
{\left\|{\frac{\partial f_A}{\partial t}}\right\|_\LW{p}} &= \lambda^{\frac{p-a}p} \det(A)^{-\frac 1p} \TT\\
\|f\|_\LW{p_a^*}  &= (\lambda^a \det(A))^{-\frac{n+a-p}{(n+a)p}}\|f\|_\LW{p_a^*}  \\
\alpha_{f_A} &= \lambda^{(a+1)\frac{n+p-1}p - p} \det(B)^{\frac{n-1}p} \alpha_f \\
{\|\partialgradient f_A\|^p_\LW{p}} &= \det(A) \lambda^{-a} {\|\partialgradient f\|^p_\LW{p}}\\
L_{f_a} &= ( \det(A) \lambda^a)^{\frac 1p} A^{-1} L_f\\
D^*{f_A}(v) &= \det(A)^{\frac np - 2} D^*_f(A^T.v).
\end{align*}
We deduce the invariance of inequalities \eqref{mainthmSob_ineq}, \eqref{mainthmGNS_ineq1}, \eqref{mainthmGNS_ineq2} and \eqref{mainthmLog_ineq}.
For inequality \eqref{mainthmSob_ineq_2}, we obtain the same factors in ${\mathcal E_p(f_A, \omega)}$ and ${\left\|{\frac{\partial f_A}{\partial t}}\right\|_\LW{p}}$ whenever $\lambda = \det(B)^{\frac 1{n-1}}$, and
\[ {\mathcal E_p(f_A, \omega)}^p + {\left\|{\frac{\partial f_A}{\partial t}}\right\|^p_\LW{p}}
= \lambda^{p-a} \det(A)^{-1} \left(\EE^p + \TT^p \right) \]
which implies the invariance of the inequality.

\smallskip

\noindent {\bf Acknowledgments:}
The first author was partially supported by Fapemig (APQ-01454-15).
The third author was partially supported by CNPq (PQ 306855/2016-0) and Fapemig (APQ 02574-16).


\begin{thebibliography}{99}

\bibitem{Au} Aubin, T. - \textit{Probl\`{e}mes isop\'{e}rim\'{e}triques et espaces de Sobolev}, J. Differential Geom. 11 (1976) 573-598.

\bibitem{BGL} Bakry, D., Gentil, I., Ledoux, M. - \textit{Analysis and geometry of Markov diffusion operators}, Grundlehren der Mathematischen Wissenschaften 348, Springer, 2013.

\bibitem{Be}
{W. Beckner} - \textit{Geometric asymptotics and the logarithmic Sobolev inequality}, Forum Math. 11, (1) (1999) 105-137.

\bibitem{CRS1} Cabré, X., Ros-Oton, X. - \textit{Sobolev and isoperimetric inequalities with monomial weights}, J. Differential Equations, 255 (2013) 4312-4336.

\bibitem{CRS2} Cabré, X., Ros-Oton, X., Serra, J. - \textit{Sharp isoperimetric inequalities via the ABP method}, arXiv:1304.1724.

\bibitem{CT} Cabr\'{e}, X., Tan, J. - \textit{Positive solutions of nonlinear problems involving the square root of the Laplacian}, Advances in Mathematics 224 (2010) 2052-2093.

\bibitem{CR} Caffarelli, L., Roquejoffre, J.M., Sire, Y. - \textit{Variational problems in free boundaries for the fractional Laplacian}, J. Eur. Math. Soc. 12 (2010) 1151-1179.

\bibitem{CaSi} Caffarelli, L., Silvestre, L. - \textit{An extension problem related to the fractional Laplacian}, Comm. Partial Differential Equations 32 (2007) 1245-1260.

\bibitem{CDDS} Capella, A, D\'{a}vila, J., Dupaigne, L., Sire, Y. - \textit{Regularity of Radial Extremal Solutions for Some Non-Local Semilinear Equations}, Comm. Partial Differential Equations 36 (2011) 1353-1384.

\bibitem{C-G} Campi, S.,  Gronchi, P. - \textit{The Lp-Busemann-Petty centroid inequality}, Adv. Math. 167 (2002) pp. 128-141.

\bibitem{Ci2} Cianchi, A. - \textit{A sharp trace inequality for functions of bounded variation in the ball}, Proceedings of the Royal Society of Edinburgh. Section A. Mathematics, vol. 142, pp. (2012) 1179-1191

\bibitem{CLYZ} Cianchi, A., Lutwak, E., Yang, D., Zhang, G. - \textit{Affine Moser-Trudinger and Morrey-Sobolev inequalities}, Calc. Var. Partial Differential Equations 36 (2009) 419-436.

\bibitem{Ci1} Cianchi, A., Ferone V., Nitsch, C., Trombetti, C. - \textit{Balls minimize trace constants in BV}, Journal für die reine und angewandte Mathematik (2014) doi:10.1515/crelle-2014-0098

\bibitem{CNV}
{Cordero-Erausquin, D., Nazaret, B., Villani, C.} - \textit{A mass-transportation approach to sharp Sobolev and Gagliardo-Nirenberg inequalities}, Advances in Mathematics, Volume 182, Issue 2, 1 March 2004, Pages 307-332.

\bibitem{DM} A. de Medeiros - \textit{The weighted Sobolev and mean value inequalities}, 143 (2015) 1229-1239.

\bibitem{DPDo}
{M. Del-Pino, J. Dolbeault} - \textit{The optimal Euclidean $\Lp$-Sobolev logarithmic inequality}, J. Funct. Anal. 197 (2003) 151-161.

\bibitem{F} Fabbri, I. - \textit{Remarks on some weighted Sobolev inequalities and applications}, Thesis, 2005.

\bibitem{FF} Federer, H.,  Fleming, W.H. - \textit{Normal and integral currents}, Ann. of Math. 72 (1960) 458-520.

\bibitem{FR} Fleming, W.H., Rishel, R. - \textit{An integral formula for total gradient variation}, Arch. Math. 11 (1960) 218-222.

\bibitem{G}
{I. Gentil} - \textit{The general optimal $\Lp$-Euclidean logarithmic Sobolev inequality by Hamilton-Jacobi equations}, J. Funct. Anal. 202 (2003) 591-599.

\bibitem{HaSc1}
{Haberl, C., Schuster, F.E.} - \textit{Asymmetric affine Lp Sobolev inequalities}, J. Funct. Anal. 257 (2009) 641-658.

\bibitem{HaSc2}
{Haberl, C., Schuster, F.E.} - \textit{General Lp affine isoperimetric inequalities}, J. Differential Geom. 83 (2009) 1-26.

\bibitem{HSX}
{Haberl, C., Schuster, F.E., Xiao, J.} - \textit{An asymmetric affine Polya-Szego principle}, Math. Annalen 352 (2012) 517-542.

\bibitem{HJM} Haddad, J., Jim\'enez, C.H., Montenegro, M. - \textit{Sharp affine Sobolev type inequalities via the $\Lp$ Busemann-Petty centroid inequality}, Journal of Functional Analysis, in press.

\bibitem{Le}
{M. Ledoux} - \textit{Isoperimetry and Gaussian analysis, in: Lectures on Probability Theory and Statistiques}, Ecole d'\'{e}t\'{e} de probabilit\'{e}s de St-Flour 1994, Lecture Notes in Mathematics Vol. 1648, Springer, Berlin, (1996) 165-294.

\bibitem{LXZ} Ludwig, M., Xiao, J., Zhang, G. - \textit{Sharp convex Lorentz-Sobolev inequalities}, Mathematische Annalen 350 (2011) 169-197.

\bibitem{Lut} Lutwak, E. - \textit{On some affine isoperimetric inequalities}, J. Differential Geom. 23 (1986) 1-13.

\bibitem{L-Y-Z} Lutwak, E., Yang, D., Zhang, G. - \textit{Lp affine isoperimetric inequalities}, J. Differential Geom. 56 (2000) 111-132.

\bibitem{L-Y-Z-1} Lutwak, E., Yang, D., Zhang, G. - \textit{Sharp affine $L_p$ Sobolev Inequalities}, J. Differential Geom. 62 (2002) 17-38.

\bibitem{L-Y-Z-2} Lutwak, E., Yang, D., Zhang, G. - \textit{On the $L_p$-Minkowski problem}, Trans. Amer. Math. Soc. 356 (2004) 4359-4370.

\bibitem{L-Y-Z-3} Lutwak, E., Yang, D., Zhang, G. - \textit{Optimal Sobolev norms and the $L_p$ Minkowski problem}, International Math. Res. Notices 1 (2006) 1-21.

\bibitem{L-Z} Lutwak, E., Zhang, G. - \textit{Blaschke-Santal\'o inequalities}, J. Differential Geom. 47 (1997) 1-16.

\bibitem{Ma} Maz'ja, V. G. - \textit{Classes of domains and imbedding theorems for function spaces}, Soviet Math. Dokl. 1 (1960) 882-885.

\bibitem{Na} Nazaret, B. - \textit{Best constant in Sobolev trace inequalities on the half-space}, Nonlinear Analysis TMA 65 (2006) 1977-1985.

\bibitem{Ng} Nguyen, V.H. - \textit{Sharp weighted Sobolev and Gagliardo-Nirenberg inequalities on half spaces via mass transport and
consequences}, Proc. London Math. Soc. 111 (2015) 127-148.

\bibitem{Petty} Petty, C.M. - \textit{Centroid surfaces}, Pacific J. Math. 11 1961 1535-1547.

\bibitem{Sch} Schneider, R. - \textit{Convex Bodies: The Brunn-Minkowski Theory}, 2nd ed. Cambridge: Cambridge University Press, 2013.

\bibitem{S} Silvestre, L. - \textit{Regularity of the obstacle problem for a fractional power of the Laplace operator},
Commun. Pure Appl. Math. 60 (2007), 67-112.

\bibitem{W1} Wang, T. - \textit{The affine Sobolev-Zhang inequality on $BV(\R^n)$}, Adv. Math. 230 (2012) 2457-2473.

\bibitem{W2} Wang, T. - \textit{The affine Pólya-Szeg\"{o} principle: Equality cases and stability}, J. Funct. Anal. 265 (2013) 1728-1748.

\bibitem{Ta} Talenti, G. - \textit{Best constant in Sobolev inequality}, Ann. Mat. Pura Appl. (iv) 110 (1976) 353-372.

\bibitem{Z} Zhang, G. - \textit{The affine Sobolev inequality}, J. Differential Geometry 53 (1999), pp. 183--202.

\end{thebibliography}
\end{document}